\documentclass[12pt]{amsart}
\usepackage{amsthm,amsmath,stmaryrd,bbm,geometry,color}
\usepackage{amssymb}
\usepackage[english]{babel}
 \usepackage[utf8]{inputenc}
\usepackage{graphicx}
\usepackage{verbatim}
\usepackage{enumitem}
\usepackage{mathtools}
\usepackage[titletoc]{appendix}
\usepackage{bm}
\usepackage{textcomp}
\usepackage{amsaddr}

\usepackage[dvipsnames]{xcolor}
\usepackage[hyperindex=true,frenchlinks=true,colorlinks=true,
citecolor=Mahogany,linkcolor=Red,urlcolor=Tan,linktocpage]{hyperref}

\usepackage{tikz}
\usetikzlibrary{shapes}
\usetikzlibrary{fit}
\usetikzlibrary{decorations.pathmorphing}

\allowdisplaybreaks

\title[Functional central limit theorems for $U$-statistics of $\beta$-mixing data]{Functional central limit theorems for $U$-statistics of $\beta$-mixing data}
\author{Davide Giraudo}
\address{Institut de Recherche Mathématique Avancée UMR 7501,
Université de
Strasbourg and CNRS 7 rue René Descartes 67000 Strasbourg, France }
\email{dgiraudo@unistra.fr}
\keywords{$U$-statistics, central limit theorem}
\date{\today}

\numberwithin{equation}{section}
\renewcommand{\leq}{\leqslant}
\renewcommand{\geq}{\geqslant}

\newtheorem{Theorem}{Theorem}[section]
\newtheorem{Proposition}[Theorem]{Proposition}
\newtheorem{Lemma}[Theorem]{Lemma}
\newtheorem{Definition}[Theorem]{Definition}

\theoremstyle{remark}

\tikzstyle{Vertex}=[circle,draw=LimeGreen!80,fill=LimeGreen!8,
inner sep=1pt,minimum size=2mm,line width=1pt,font=\scriptsize]
\tikzstyle{Node}=[Vertex,draw=RoyalBlue!80,fill=RoyalBlue!8,inner
sep=1.5pt]
\tikzstyle{Leaf}=[rectangle,draw=Black!70,fill=Black!16,
inner sep=0pt,minimum size=1mm,line width=1.25pt]
\tikzstyle{Edge}=[Maroon!80,cap=round,line width=1pt]
\tikzstyle{Mark1}=[draw=BrickRed!80,fill=BrickRed!8]
\tikzstyle{Mark2}=[draw=BurntOrange!80,fill=BurntOrange!8]
\tikzstyle{EdgeRew}=[->,RedOrange!80,cap=round,thick]

\newcommand{\Aca}{\mathcal{A}}
\newcommand{\Bca}{\mathcal{B}}

\newcommand{\Fca}{\mathcal{F}}

\newcommand{\Hca}{\mathcal{H}}

\newcommand{\Nca}{\mathcal{N}}

\newcommand{\Sca}{\mathcal{S}}

\newcommand{\intent}[1]{\llbracket #1\rrbracket}

\newcommand \ens[1]{\left\{ #1\right\}}

\newcommand \R{\mathbb R}

\newcommand \N{\mathbb N}
\newcommand \PP{\mathbb P}

\newcommand{\E}[1]{\mathbb E\left[#1\right]}

\newcommand \Z{\mathbb Z}

\newcommand \abs[1]{\left|#1\right|}
\newcommand \eps{\varepsilon}

\newcommand{\pr}[1]{\left(#1\right)}
\newcommand{\norm}[1]{\left\lVert #1 \right\rVert}
\newcommand{\gr}[1]{\bm{#1}}

\newcommand{\gri}{\gr{i}}

\newcommand{\inc}{\operatorname{Inc}}

\newcommand{\ent}[1]{\left\lfloor #1\right\rfloor}

\begin{document}
 
\begin{abstract}
    We investigate the convergence of partial sum processes based on a strictly stationary $\beta$-mixing sequence of random variables. The convergence in the space of continuous function as well as in H\"older spaces is considered. The conditions are close to optimality.
\end{abstract}

\maketitle
\section{Introduction}
Given a sequence of random variables $\pr{X_i}_{i\geq 1}$ taking values in a measurable space $\pr{S,\Sca}$ and a kernel $h\colon S^m\to\R$, the $U$-statistic of order $m\geq 2$, introduced by Hoeffding \cite{MR26294}, is defined as 
\begin{equation*}
    U_{m,n}\pr{h}=\sum_{\gri\in\inc^m_n}h\pr{X_{\gri}},
\end{equation*}
where $\inc^m_n=\ens{\gri=\pr{i_1,\dots,i_m},1\leq i_1<\dots<i_m\leq n}$ and for $\gri=\pr{i_1,\dots,i_m}\in \inc^m_n$, 
$X_{\gri}=\pr{X_{i_1},\dots,X_{i_m}}$. 
In the sequel, we will assume that $h$ is symmetric, that is, for each bijection $\sigma\colon\intent{1,m}\to \intent{1,m}$ and each $x_1,\dots,x_m\in S$, $h\pr{x_{\sigma\pr{1}},\dots,x_{\sigma\pr{m}}}=h\pr{x_1,\dots,x_m}$, where for integers $a$ and $b$ such that $a\leq b$ $\intent{a,b}:=\ens{k\in\N,a\leq k\leq b}$.
When $\pr{X_i}_{i\geq 1}$ is independent identically distributed (i.i.d.) and $h\pr{X_1,\dots,X_m}\in\mathbb L^1$, $U_{m,n}\pr{h}/\binom{n}{m}$ is an unbiaised estimator of $\E{h\pr{X_1,\dots,X_m}}$.

The aim of this paper is to complement the results on the asymptotic behavior of $\pr{U_{m,n}\pr{h}}_{n\geq m}$  by 
the study of the convergence of the partial sum process 
defined by 
\begin{equation}\label{eq:Uprocess}
W_{m,n}\pr{h,t}=\begin{cases}
\sum_{\gri\in\inc_m^{j}}h\pr{X_{\gri}},& \quad t=\frac jn\\
\mbox{linear interpolation }&\mbox{on }\pr{\frac jn,\frac{j+1}n}, j\in\intent{0,n-1}.
\end{cases}  
\end{equation}
The map $t\mapsto W_{m,n}\pr{h,t}$ is continuous and 
the process $\pr{W_{m,n}\pr{h,t}}_{0\leq t\leq 1}$ contains the value of the $U$-statistic based on each subsample $X_1,\dots,X_k$, $k\in\intent{1,n}$.
Some statistical test can be performed using a continuous functional on the space $C\pr{[0,1]}$, using for instance 
the supremum norm. Moreover, since the limit process belongs to the space of Hölder continuous functions as well as the maps $t\mapsto W_{m,n}\pr{h,t}$, it makes sense to study the convergence of the U-process defined by \eqref{eq:Uprocess} in these function spaces. Beyond the mathematical curiosity, test statistics based on Hölder norm allows to detect a change of parameter over a short subset of $\intent{1,n}$, whose size depends on the modulus of regularity of the considered Hölder space.

\begin{Definition}
We say that a symmetric kernel $h\colon S^m\to \R$ is canonical with respect to a strictly stationary sequence $\pr{X_i}_{i\in\Z}$ if for each $y_1,\dots,y_{m-1}\in S$,
\[
\int h\pr{x,y_1,\dots,y_{m-1}}d\PP_{X_0}\pr{y_1}\dots 
d\PP_{X_0}\pr{y_{m-1}}=0.
\]
\end{Definition}
Hoeffding's decomposition (see \cite{MR26294}, section 5) allows to express the original $U$-statistic in terms of a sum of $U$-statistics of lower order having a canonical kernel. More precisely, 
define 
\begin{equation}\label{eq:definition_de_pi_km}
   \pr{\pi_{k,m}\pr{h}}\pr{x_1,\dots,x_k}:=
\pr{\delta_{x_1}-\PP_{X_0}}\dots \pr{\delta_{x_k}-\PP_{X_0}}\PP_{X_0}^{\otimes \pr{m-k}}h, 
\end{equation}
where for signed measures $Q_1,\dots Q_m$, 
\[Q_1\dots Q_m h =
\int h\pr{x_1,\dots,x_m}dQ_1\pr{x_1}\dots dQ_m\pr{x_m}
\]
and $\PP_{X_0}^{\otimes j}$ denotes the product measure $\PP_{X_0}$ taken $j$ times.
The following decomposition holds: 
\begin{equation}\label{eq:Hoeffding}
\frac 1{\binom nm}    U_{m,n}\pr{h}
=\sum_{c=0}^{m}\frac{\binom{m}{c}}{\binom{n}{c}}
U_{c,n}\pr{\pi_{c,m}\pr{h}},
\end{equation}
where each kernel $\pi_{c,m}\pr{h}, c\in\intent{1,m}$ is canonical.
After centering, this gives 
\begin{multline}\label{eq:Hoeffding_centered}
    U_{m,n}\pr{h}-\E{U_{m,n}\pr{h}}\\ =
    \frac{m}{n} \binom nm \pr{U_{1,n}\pr{\pi_{1,m}\pr{h}} 
    -\E{U_{1,n}\pr{\pi_{1,m}\pr{h}} }}+
    \sum_{c=2}^{m} \binom nm  \frac{\binom{m}{c}}{\binom{n}{c}}
U_{c,n}\pr{\pi_{c,m}\pr{h}}.
\end{multline}
Usually, the convergence is carried by the first term of the right hand side of \eqref{eq:Hoeffding_centered} (which will be referred as "linear part") while the contribution of the other terms is negligible.
When the sequence $\pr{X_i}_{i\in \Z}$ is i.i.d., 
martingale properties show that the contribution of the term $\sum_{c=2}^{m} \binom nm  \frac{\binom{m}{c}}{\binom{n}{c}}
U_{c,n}\pr{\pi_{c,m}\pr{h}}$ is negligible. When $\pr{X_i}_{i\in \Z}$ is strictly stationary but not independent, the increments in each term of summand involved in $U_{c,n}\pr{\pi_{c,m}\pr{h}}$ are in general not orthogonal. In this paper, we are interested in $\beta$-mixing sequences which are defined as follows.  

Let $\pr{\Omega,\Fca,\PP}$ be a probability space. 
The $\alpha$-mixing and $\beta$-mixing coefficients between two sub-$\sigma$-algebras 
$\Aca$ and $\Bca$ of $\Fca$ are defined respectively by 
\begin{align*}
 \alpha\pr{\Aca,\Bca}&=\sup\ens{
 \abs{\PP\pr{A\cap B}-\PP\pr{A}\PP\pr{B}},A\in\Aca, B\in \Bca
 }\\
 \beta\pr{\Aca,\Bca}&=\frac 12\sup\ens{
 \sum_{i=1}^I\sum_{j=1}^J\abs{\PP\pr{A_i\cap B_j}-
 \PP\pr{A_i}\PP\pr{B_j}}},
\end{align*}
where the supremum runs over all the partitions 
$\pr{A_i}_{i=1}^I$ and $\pr{B_j}_{j=1}^J$ of $\Omega$ of 
elements of $\Aca$ and $\Bca$ respectively.
Given a strictly stationary sequence $\pr{X_i}_{i\geq 1}$, we associate its 
sequences of $\alpha$ and $\beta$-mixing coefficients
by letting 
\begin{equation*}
\alpha\pr{k}:=\sup_{\ell\geq 1}
\alpha\pr{\Fca_1^\ell,\Fca_{\ell+k}^{\infty}}, \quad 
\beta\pr{k}:=\sup_{\ell\geq 1}
\beta\pr{\Fca_1^\ell,\Fca_{\ell+k}^{\infty}},
\end{equation*}
where $\Fca_u^v$, $1\leq u\leq v\leq +\infty$ is 
the $\sigma$-algebra generated by the random variables 
$X_i$, $u\leq i\leq v$ ($u\leq i$ for $v=\infty$). A sequence $\pr{X_i}_{i\geq 1}$ is said to be absolutely regular if $\lim_{k\to +\infty}\beta\pr{k}=0$.
 
The main goals of the papers are the following: give sufficient condition on integrability of the random variables $h\pr{X_{\gri}}$ and on the sequences 
$\pr{\alpha\pr{k}}_{k\geq 1}$ and $\pr{\beta\pr{k}}_{k\geq 1}$ for the convergence of :
\begin{enumerate}
    \item an appropriately centered and normalized version of $U_{m,n}\pr{h}$ to a normal distribution,
    \item an appropriately centered and normalized version of the process $\pr{W_{m,n}\pr{h,t},t\in [0,1]}$ to a Brownian motion in $C\pr{[0,1]}$ 
    and
    \item an appropriately centered and normalized version of the process $\pr{W_{m,n}\pr{h,t},t\in [0,1]}$ to a Brownian motion in the space of H\"older continuous functions.
\end{enumerate}
The common elements of the proof are the following. We decompose the process of interest using \eqref{eq:Hoeffding_centered}. The contribution of the term of index $c=1$ is basically that of a usual partial sum process based on a strictly stationary sequence. Sufficient conditions for the aforementioned results are already known. It remains to show that the contribution of the terms in \eqref{eq:Hoeffding_centered} of index $c\in \intent{2,m}$ is negligible. To do so, we use deviation inequalities for increments and for maxima of canonical $U$-statistics. These upper bounds are obtained via truncation and consist of two terms: the first one is an application of a covariance inequality for bounded canonical kernel and the second one corresponds to the unbounded part and is expressed in terms of the tail of the random variables  $h\pr{X_{\gri}}$.

As it will be explained after the statement of the theorems, our results improve existing results on the central limit theorem by giving a sharp condition and provides new results on its functional version. 

The paper is organized as follows: Section~\ref{sec:TLC} gives our result on the central limit theorem, Section~\ref{sec:TLCF_cont} gives our result on the functional central limit theorem in the space of continuous functions and Section~\ref{sec:TLC_Hold} the corresponding one for H\"older spaces. Section~\ref{sec:tools} contains the intermediate needed results, namely, the deviation inequalities and that the integrability assumption made in the statement of theorems are also satisfied by the random variables involved in the canonical $U$-statistics of Hoeffding's decomposition~\ref{eq:Hoeffding_centered}. Finally, section~\ref{sec:proofs} contains the proof of the results.

\section{Central limit theorem}\label{sec:TLC}
In order to state the main results of the paper, we need the following notations.
The generalized inverse of the tail function of a random variable $Y$ is defined as 
\begin{equation*}
    Q_Y\pr{u}=\inf\ens{t>0,\PP\pr{\abs{Y}>t}\leq u}.
\end{equation*}
 Let $\pr{X_1^*,\dots,X_{m-1}^*}$ a vector that is  independent of $\pr{X_i}_{i\in \Z}$ and that consist of i.i.d.\ random variables having the same distribution as $X_0$.
We define for $k\in\intent{1,m-1}$ the random variable 
\begin{align*}
    H_{\gri,k}&:=h\pr{X_{i_1},\dots,X_{i_k},X_1^*,\dots,X_{m-k}^*}  ,\quad \gri\in\inc^k:=\bigcup_{n\geq k}\inc^k_n\mbox{ and}\\
    H_{\gri,m}&:= 
 h\pr{X_{i_1},\dots,X_{i_m}}.
\end{align*}
These random variable arise naturally in Hoeffding's decomposition since the terms are of the form  $\E{H_{\gri,k}\mid X_{i_1},\dots,X_{i_k},X_1^*,\dots,X_j^*}$.
 
The first result of our paper is a central limit theorem for $U$-statistics based on an absolutely regular sequence. 
\begin{Theorem}\label{thm:TLC}
Let $h\colon S^m\to\R$ be a symmetric kernel, where $\pr{S,\Sca}$ is a measurable space, let $\pr{X_i}_{i\geq 1}$ be a strictly stationary sequence. Let $h_1\colon S\to \R$ be defined as 
\begin{equation}\label{eq:definition_de_h1}
    h_1\pr{x}=\int h\pr{x,y_1,\dots,y_{m-1}}d\PP_{X_0}\pr{y_1}\dots 
    d\PP_{X_0}\pr{y_{m-1}},\quad x\in S.
\end{equation}

Suppose that following conditions are satisfied:
\begin{equation}\label{eq:TLC_DMR}
    \sum_{k= 1}^\infty \int_0^{\alpha\pr{\sigma\pr{X_i,i\leq 0},\sigma\pr{X_k}}}Q^2_{ h_1\pr{X_0} }\pr{u}du<\infty,
\end{equation}
\begin{equation}\label{eq:TLC_UI}
 \mbox{the family }   \ens{H_{\gri,k}^2,k\in\intent{1,m},\gri\in\inc^k}
    \mbox{is uniformly integrable and}
\end{equation}
\begin{equation}\label{eq:condition_sur_beta}
    \sum_{k=1}^{+\infty} k\beta\pr{k}<+\infty.
\end{equation}

 Then the following convergence in distribution takes place: 
 \begin{equation*}
     \frac{\sqrt{n}}{\binom nm}\pr{U_{m,n}\pr{h}-\E{U_{m,n}\pr{h}} }
     \to m\sigma\Nca,
 \end{equation*}
 where $\Nca$ has a standard normal distribution and $\sigma^2$ 
 is given by 
 \begin{equation}\label{eq:definition_de_sigma}
    \sigma^2=\sum_{i\in\Z}\operatorname{Cov}\pr{h_1\pr{X_0},h_1\pr{X_i}}.
\end{equation}
\end{Theorem}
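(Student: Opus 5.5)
We decompose $U_{m,n}\pr{h}-\E{U_{m,n}\pr{h}}$ with Hoeffding's decomposition \eqref{eq:Hoeffding_centered} and multiply by $\sqrt n/\binom nm$. The term $c=1$ becomes
\[
\frac{m}{\sqrt n}\sum_{i=1}^n\pr{h_1\pr{X_i}-\E{h_1\pr{X_0}}},
\]
since $\pi_{1,m}\pr{h}=h_1-\E{h_1\pr{X_0}}$. Condition \eqref{eq:TLC_DMR} is exactly Doukhan--Massart--Rio's condition for the strictly stationary sequence $\pr{h_1\pr{X_i}}_{i\geq 1}$. Under it, Rio's central limit theorem for strongly mixing sequences gives convergence of this normalized partial sum to $\sigma\Nca$, with the series \eqref{eq:definition_de_sigma} absolutely convergent. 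This yields the limit $m\sigma\Nca$. It therefore suffices to show that for each $c\in\intent{2,m}$,
\[
\frac{\sqrt n}{\binom nm}\binom nm\frac{\binom mc}{\binom nc}U_{c,n}\pr{\pi_{c,m}\pr{h}}
\asymp n^{1/2-c}U_{c,n}\pr{\pi_{c,m}\pr{h}}\to 0
\]
in probability.

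To control $U_{c,n}\pr{\pi_{c,m}\pr{h}}$ we use truncation. The kernel $\pi_{c,m}\pr{h}$ is an alternating sum, over subsets of $\intent{1,c}$, of conditional expectations of $h$ with some coordinates integrated against $\PP_{X_0}$. Evaluated at $X_{\gri}$, each such term is bounded in absolute value by a conditional expectation of $\abs{H_{\gri',k}}$ for some $k\leq c$ and $\gri'$ a sub-index of $\gri$. Fix $R>0$ and write $h=h\mathbf 1_{\abs h\leq R}+h\mathbf 1_{\abs h>R}$. Because $\pi_{c,m}$ is linear, $\pi_{c,m}\pr{h}=\pi_{c,m}\pr{h^{\leq R}}+\pi_{c,m}\pr{h^{>R}}$, and both pieces are canonical.

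For the unbounded piece we estimate crudely in $\mathbb L^1$ or $\mathbb L^2$. The uniform integrability condition \eqref{eq:TLC_UI} makes $\sup_{k,\gri}\E{H_{\gri,k}^2\mathbf 1_{\abs{H_{\gri,k}}>R}}$ small as $R\to\infty$. One still needs a second-moment bound of order $n^{2c-1}$ times this quantity. To get it, expand $\E{U_{c,n}\pr{\pi_{c,m}\pr{h^{>R}}}^2}$ as a sum over pairs $\pr{\gri,\gr j}$. Then use canonicity together with a coupling, as in Berbee's lemma or Yoshihara's covariance inequality for $\beta$-mixing. Whenever the largest gap in the merged index set isolates an index, the covariance is controlled by a $\beta$-coefficient times the truncated moment.

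For the bounded piece $\pi_{c,m}\pr{h^{\leq R}}$, Yoshihara's inequality for bounded canonical kernels applies. The covariance of $g\pr{X_{\gri}}g\pr{X_{\gr j}}$ is at most $4R^2\beta\pr{\ell}$, where $\ell$ is the largest gap isolating a single index in the ordered union of $\gri$ and $\gr j$. Counting configurations gives
\[
\E{U_{c,n}^2}\lesssim R^2\sum_{\text{configurations}} \beta\pr{\ell}\lesssim R^2 n^{2c-2}\sum_\ell \ell\,\beta\pr{\ell},
\]
up to lower-order terms. Here condition \eqref{eq:condition_sur_beta} is exactly what is used. Hence $n^{1-2c}\E{U_{c,n}\pr{\pi_{c,m}\pr{h^{\leq R}}}^2}\lesssim R^2/n\to 0$ for fixed $R$. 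Letting $n\to\infty$ and then $R\to\infty$ yields the negligibility.

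The main obstacle is this combinatorial counting of index configurations in the variance bound for degenerate $U$-statistics under $\beta$-mixing. We must show that the total is $O\pr{n^{2c-1}}$ or better, using only $\sum k\beta\pr{k}<\infty$, for both pieces. For the unbounded piece, the rate must additionally carry the truncated-moment factor. Here I would first try the approach of Yoshihara and Dehling--Wendler: order the $2c$ indices, take the maximal gap, and apply Berbee coupling at that gap. If the isolated side contains a single index, canonicity kills the main term. Otherwise, the number of configurations with a given maximal gap $\ell$ is $O\pr{n^{c}\ell^{c-1}}$, which suffices for $c\geq2$.
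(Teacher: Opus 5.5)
\textbf{There is a genuine gap in your treatment of the unbounded piece.} Set $g_R:=\pi_{c,m}\pr{h\mathbf{1}_{\abs{h}>R}}$.

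You want $\E{U_{c,n}\pr{g_R}^2}\lesssim n^{2c-1}\sup_{k,\gri}\E{H_{\gri,k}^2\mathbf{1}_{\abs{H_{\gri,k}}>R}}$. For this you claim that, after coupling at an isolating gap $\ell$, the term $\E{g_R\pr{X_{\gri}}g_R\pr{X_{\gr{j}}}}$ is bounded by $\beta\pr{\ell}$ times the truncated second moment. Berbee's coupling (equivalently Lemma~\ref{lem:comparaison_esperances}) gives a bound of the form $\abs{\E{f\pr{X}}-\E{f\pr{\xi}}}\leq 2\beta\pr{\ell}\norm{f}_\infty$ only for bounded $f$.

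For the unbounded $f=g_R\otimes g_R$ you are left with $\E{\abs{f}\mathbf{1}_{E^c}}$, where $\PP\pr{E^c}\leq\beta\pr{\ell}$. Since $f$ is merely integrable (a product of two $\mathbb L^2$ variables), this quantity has no rate in $\beta\pr{\ell}$. A bound of the form $\beta\pr{\ell}^{\delta'}$ (Yoshihara's inequality) needs moments of order strictly larger than $2$, which is exactly what the theorem avoids. Without a summable rate, summing over the roughly $n^c\ell^{c-1}$ configurations with gap $\ell$ only yields $o\pr{n^{2c}}$, not $O\pr{n^{2c-1}}$.

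Your fallback, estimating $g_R$ ``crudely in $\mathbb L^1$'', also fails with fixed $R$. After normalization it gives $n^{1/2-c}\cdot n^c\,\tau\pr{R}=\sqrt n\,\tau\pr{R}$, where $\tau\pr{R}:=\sup_{k,\gri}\E{\abs{H_{\gri,k}}\mathbf{1}_{\abs{H_{\gri,k}}>R}}$. This does not tend to $0$ as $n\to\infty$, so the order of limits ``first $n\to\infty$, then $R\to\infty$'' breaks down.

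\textbf{The missing idea is to let the truncation level grow with $n$, as the paper does.} Take $R=R_n$ of order $\eps\sqrt{\eta n}$.
\begin{itemize}
\item Bounded piece: Chebyshev together with your (correct) bounded-kernel estimate $\E{U_{c,n}\pr{\pi_{c,m}\pr{h\mathbf{1}_{\abs{h}\leq R}}}^2}\lesssim R^2n^{2c-2}B$ gives a contribution of order $R_n^2B/\pr{\eps^2n}\lesssim\eta$.
\item Tail piece: Markov in $\mathbb L^1$, combined with $\E{Y\mathbf{1}_{Y>R}}\leq R^{-1}\E{Y^2\mathbf{1}_{Y>R}}$, gives $\eps^{-1}\sqrt n\,\tau\pr{R_n}\lesssim\eps^{-2}\eta^{-1/2}\sup_{k,\gri}\E{H_{\gri,k}^2\mathbf{1}_{\abs{H_{\gri,k}}>R_n}}$. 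This tends to $0$ by \eqref{eq:TLC_UI}.
\end{itemize}
Truncating $h$ before applying $\pi_{c,m}$ is fine and even convenient here. By symmetry of $h$, $\E{\abs{g_R\pr{X_{\gri}}}}\leq 2^c\tau\pr{R}$ directly, without the coupling step the paper uses for \eqref{eq:bound_expectation}.

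\textbf{The bias term also needs attention.} For dependent data $\E{\pi_{c,m}\pr{h}\pr{X_{\gri}}}\neq 0$ in general, so the $c\geq 2$ terms of the centered decomposition must themselves be centered. Convergence in probability of $n^{1/2-c}U_{c,n}\pr{\pi_{c,m}\pr{h}}$ alone is therefore not enough. The paper treats the bias separately in Proposition~\ref{prop:neg_terme_biais}. The corrected argument above actually gives convergence to $0$ in $\mathbb L^1$, which takes care of the bias automatically.
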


The assumption \ref{eq:TLC_DMR} is standard in order to guarantee  a central limit theorem for normalized partial sums of a strictly stationary sequence. The combination of \eqref{eq:TLC_UI}
and \eqref{eq:condition_sur_beta} shows that the terms of index $c\in\intent{2,m}$ in the decomposition \eqref{eq:Hoeffding_centered} are negligible.

Our result extends that of Yoshihara in the sense that the assumptions on the dependence are weaker in our result. 

A natural question is whether one can formulate a result involving only the $\alpha$-mixing coefficients and not the $\beta$-mixing coefficients, like in Theorem~1.8 in \cite{zbMATH05639709} for $U$-statistics of order $2$. To do so, one would need an appropriate covariance inequality and a generalization of the notion of $\mathcal P$-continuity given in Definition~1.4 of the aforementioned paper. This is beyond the scope of ours.

\section{Functional central limit theorem in $C[0,1]$}\label{sec:TLCF_cont}
In this section, we study the convergence in $D[0,1]$ of  the partial sum process defined by 
\begin{equation}\label{eq:definition_partial_sum_process}
W_{m,n}\pr{h,t}=\begin{cases}
\sum_{\gri\in\inc_m^{j}}h\pr{X_{\gri}},& \quad t=\frac kn\\
\mbox{linear interpolation }&\quad \mbox{on }\pr{\frac kn,\:\frac{k+1}n}, j\in\intent{0,n-1}.
\end{cases}  
\end{equation}
In the i.i.d.\ case, Miller and Sen \cite{zbMATH03378419} showed the following convergence in distribution
\begin{equation}\label{eq:WIP_iid}
   \pr{ \frac{\sqrt{n}}{\binom nm}\pr{W_{m,n}\pr{h,t}-\E{W_{m,n}\pr{h,t}} }}_{0\leq t\leq 1}
     \to \pr{m\sigma t^{m-1}B_t}_{0\leq t\leq 1}\mbox{in }C[0,1], 
\end{equation}
     where $\sigma^2=\operatorname{Var}\pr{\E{h\pr{X_1,\dots,X_m}\mid X_1}}$.
This result was extended by Denker and Keller \cite{zbMATH03822814}. It was in particular shown that if there exists some positive $\delta$ such that $\sup_{\gri\in \inc^m}\E{\abs{H_{\gri,m}}^{2+\delta}}$ is 
finite and for some $\varepsilon\in \pr{0,1/2}$,  $\beta\pr{k}^{\frac{\delta}{2+\delta}}=O\pr{k^{-2-\varepsilon}}$, then the convergence \eqref{eq:WIP_iid} holds with $\sigma$ as in \eqref{eq:definition_de_sigma}. If $\beta\pr{k}$ is of the form $k^{-\gamma}$, our condition requires that $\gamma>2$ while their condition that $\gamma\geq 2+4/\delta$. 

\begin{Theorem}\label{thm:TLCF_C}
Let $h\colon S^m\to\R$ be a symmetric kernel, where $\pr{S,\Sca}$ is a measurable space, let $\pr{X_i}_{i\geq 1}$ be a strictly stationary sequence. Let $h_1\colon S\to \R$ be defined  in 
\eqref{eq:definition_de_h1}.

Suppose that  conditions \eqref{eq:TLC_DMR}, \eqref{eq:condition_sur_beta} are satisfied, and that  
\begin{equation}\label{eq:TLCF_UI}
 \exists a>1/2\mbox{ such that }   \max_{k\in \intent{1,m}}\sup_{\gri\in\inc^ k}\E{H_{\gri,k}^2\pr{\log\pr{1+\abs{H_{\gri,k}}} }^a }<+\infty. 
\end{equation}
 Then the following convergence in distribution takes place: 
 \begin{equation}\label{eq:TLCF_cont}
    \pr{ \frac{\sqrt{n}}{\binom nm}\pr{W_{m,n}\pr{h,t}-\E{W_{m,n}\pr{h,t}} } }_{0\leq t\leq 1}
     \to \pr{m\sigma t^{m-1}B_t}_{0\leq t\leq 1}\mbox{in }C[0,1],
 \end{equation}
 where $\pr{B_t}_{0\leq t\leq 1}$ is a standard Brownian motion and $\sigma^2$ 
 is given by \eqref{eq:definition_de_sigma}.
\end{Theorem}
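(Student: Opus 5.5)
We decompose the process using the Hoeffding decomposition \eqref{eq:Hoeffding_centered}, applied for each $j\in\intent{1,n}$ to the $U$-statistic $U_{m,j}\pr{h}=W_{m,n}\pr{h,j/n}$. Since $\binom jm\frac{m}{j}=\binom{j-1}{m-1}$, the term of index $c=1$ equals $\binom{j-1}{m-1}\sum_{i=1}^j\pr{h_1\pr{X_i}-\E{h_1\pr{X_0}}}$. Multiplying by $\sqrt n/\binom nm$ gives, up to interpolation, $\frac{\binom{j-1}{m-1}}{\binom nm}\sqrt n\, S_j$, where $S_j$ is the centered partial sum of $h_1\pr{X_i}$. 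Uniformly in $j$, $\binom{j-1}{m-1}/\binom nm=\frac{m}{n}(j/n)^{m-1}+O(n^{-2})$. The linear part is therefore $m (\lfloor nt\rfloor/n)^{m-1}\, n^{-1/2}S_{\lfloor nt\rfloor}$ plus an error bounded by $n^{-3/2}\max_j\abs{S_j}$, and this error is negligible. Condition \eqref{eq:TLC_DMR} is Doukhan–Massart–Rio's condition, which gives the invariance principle $n^{-1/2}S_{\lfloor n\cdot\rfloor}\to\sigma B$ in $D[0,1]$, with the linear-interpolation version in $C[0,1]$. Indeed \eqref{eq:TLCF_UI} gives $h_1\pr{X_0}\in\mathbb L^2$, and $\alpha\leq\beta$ together with \eqref{eq:condition_sur_beta} are compatible with it. Multiplication by the continuous deterministic function $t\mapsto m t^{m-1}$ is a continuous map on $C[0,1]$, so the continuous mapping theorem gives convergence of the linear part to $m\sigma t^{m-1}B_t$. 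It remains to show that, for each $c\in\intent{2,m}$,
\begin{equation*}
\frac{\sqrt n}{\binom nm}\max_{1\leq j\leq n}\binom jm\frac{\binom mc}{\binom jc}\abs{U_{c,j}\pr{\pi_{c,m}\pr{h}}}\to 0\quad\mbox{in probability}.
\end{equation*}
The interpolation only costs a maximum over grid points. Since $\binom jm/\binom jc\lesssim j^{m-c}\leq n^{m-c}$, it suffices to prove $n^{-c+1/2}\max_{j\leq n}\abs{U_{c,j}\pr{\pi_{c,m}\pr{h}}}\to 0$ in probability.

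For this negligibility we use the deviation inequality for maxima of canonical $U$-statistics from Section~\ref{sec:tools}. We also use the fact, proved there, that the integrability assumption \eqref{eq:TLCF_UI} transfers to the random variables $\pi_{c,m}\pr{h}\pr{X_{\gri}}$; these are built from conditional expectations of the $H_{\gri,k}$, and convexity of $x\mapsto x^2\log^a(1+|x|)$ handles them. The scheme is truncation at a level $M_n$.

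The bounded part is recentered so that it stays canonical. For it, the covariance inequality for bounded canonical kernels under $\beta$-mixing (a Yoshihara-type bound, summable thanks to $\sum k\beta(k)<\infty$) gives $\E{U_{c,j}^2}\lesssim M_n^2 j^{c}$ for the difference of increments. A chaining/dyadic maximal inequality then bounds the maximum at the cost of a logarithmic factor, giving something like $\E{\max_{j\leq n}U_{c,j}^2}\lesssim M_n^2 n^{c}(\log n)^2$, or a sharper bound.

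For the unbounded part we bound crudely in $\mathbb L^1$: $n^{-c+1/2}\sum_{\gri\in\inc^c_n}\E{\abs{\pi\pr{X_\gri}}\mathbf 1\{\abs{\pi}>M_n\}}\lesssim n^{1/2}\sup \E{\abs{H}\mathbf 1\{\abs H>M_n\}}\leq n^{1/2}\,\frac{\sup\E{H^2\log^a(1+|H|)}}{M_n\log^a M_n}$.

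We then choose $M_n$ to balance the two parts. For $c\geq 2$ the bounded part contributes $n^{-2c+1}M_n^2n^c(\log n)^2=M_n^2 n^{1-c}(\log n)^2$, and the choice $M_n=\sqrt n$ makes it $O(n^{2-c}(\log n)^2)$. The tail part is then $O(\log^{-a}n)\to0$.

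\textbf{Main obstacle.} The critical case is $c=2$. There the crude bound above does not vanish, so the maximal inequality must lose less than a full $\log n$ factor. The assumption $a>1/2$ suggests exactly this: the bounded part should be controlled with a maximal inequality losing only $\log n$ in the \emph{second} moment, while the tail is handled in $\mathbb L^2$-type terms. Concretely, I would split into dyadic blocks and apply the deviation inequality for maxima on each block, summing tails. The bounded part then gives $\PP(\max>\eps n^{3/2})\lesssim M_n^2 n^2\log n/(\eps^2 n^3)$. For the tail part, instead of bounding by $\mathbb L^1$, I would use the variance-type bound supplied by the deviation inequality, namely $n^{-1}\E{H^2\mathbf 1\{|H|>M_n\}}n^{2}\cdot\log n/n^{2}$. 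Taking $M_n=\sqrt{n/(\log n)^{1+\delta}}$ should make both parts vanish, the tail piece being $\lesssim\log n/\log^{2a}M_n\to0$ precisely because $2a>1$. Getting this bookkeeping right, with the exact form of the maximal deviation inequality for $c=2$, is where I expect the real difficulty.
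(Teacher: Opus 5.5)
Your linear part is fine and coincides with the paper's: the invariance principle under \eqref{eq:TLC_DMR}, plus uniform replacement of $\binom{j-1}{m-1}/\binom nm$ by $\frac mn\pr{j/n}^{m-1}$. The gap is in the negligibility of the terms $c\geq 2$, and it is wider than you state.

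\textbf{The case $c\geq 3$ is not easier.} The bound $\E{U_{c,j}^2}\lesssim M_n^2j^c$ is not available under your assumptions. Proposition~\ref{prop:covariance_inequality} gives $j^c\pr{1+\sum_{k\leq j}k^{c-1}\beta\pr{k}}M_n^2$, and \eqref{eq:condition_sur_beta} only makes the sum $o\pr{j^{c-2}}$, with no rate. So for every $c\geq2$, after normalizing by $n^{2c-1}$, the bounded part is in general of the same order $M_n^2\pr{\log n}^2/n$ as for $c=2$. This is how the paper gets \eqref{eq:inegalite_max_h_canonique} for all $c$, via $k^{c-1}\leq N^{c-2}k$. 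Hence $M_n=\sqrt n$ does not work for any $c$.

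\textbf{Your remedies for this common term do not work.}
\begin{itemize}
\item Lemma~\ref{lem:comparaison_esperances} and Proposition~\ref{prop:covariance_inequality} require a bounded kernel. With the available tools, the truncated-off part is controlled only in $\mathbb L^1$ (Markov), and no ``variance-type bound'' for it is at hand.
\item Summing deviation bounds over dyadic blocks removes no logarithm. The top block alone already carries $\pr{\log n}^2$, and a chaining union bound over the $\log_2 n$ levels costs $\pr{\log n}^3$.
\end{itemize}
With what is actually proved, the best choice $M_n\asymp\sqrt n/\log n$ leaves a tail term $\sqrt n\,\E{\abs{H}\mathbf 1\ens{\abs{H}>M_n}}\lesssim\pr{\log n}^{1-a}$. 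That requires $a>1$. Your final estimate $\log n/\log^{2a}M_n$ does not follow from any bound you display. A smaller point: under dependence $\E{\pi_{c,m}\pr{h}\pr{X_{\gri}}}\neq0$, so you also need Proposition~\ref{prop:neg_terme_biais} to remove the bias.

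The paper runs exactly your first scheme. It applies \eqref{eq:inegalite_max_h_canonique} with $t=\eps$ and truncates at $R^2=\eta\eps^2n/\pr{K\log n}$. It then uses $\E{Y\mathbf 1_{Y>A}}\leq\E{Y^2\pr{\log\pr{1+Y}}^a}/\pr{A\pr{\log\pr{1+A}}^a}$, which gives $\sqrt{\log n}/\pr{\log n}^a\to0$ for $a>1/2$.

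However, this choice of $R$ works only because, when $K:=C_cB$ is introduced, the factor $\pr{\log n}^2$ of \eqref{eq:inegalite_max_h_canonique} is rewritten as $\log n$. That factor comes from the $\pr{\log_2 2n}^2$ of the Rademacher--Menshov-type maximal inequality used to prove \eqref{eq:inegalite_max_h_canonique_h_bornee}. If $\pr{\log n}^2$ is kept, this $R$ makes the bounded term $\eta\log n$. Re-optimizing $R$ gives your $\pr{\log n}^{1-a}$, so the argument as written only yields the theorem for $a>1$.

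Your diagnosis is therefore correct: reaching $a>1/2$ by this route needs a maximal inequality that loses only one logarithm in the second moment. That ingredient is missing from the paper's written argument as well as from yours. It cannot come from second-moment increment bounds linear in the block length alone, such as $\E{\pr{U_{2,a+N}-U_{2,a}}^2}\lesssim nNR^2$ for $c=2$. That is the borderline Rademacher--Menshov situation, where $\pr{\log n}^2$ is sharp in general. Additional structure would be needed, e.g.\ a martingale-type approximation of the bounded canonical part.
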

Here again, assumption \eqref{eq:TLC_DMR} is made in order to guarantee the convergence of the linear part in Hoeffding's decomposition and the other two to show that the remaining terms are negligible. To this aim, an application of a maximal inequality was made and gave a $\log$ factor that had to be compensated by a stronger integrability assumption. 

In \cite[Theorem~1.3]{zbMATH07902528}, a functional central limit theorem for $U$-statistics of order $2$ has been obtained under \eqref{eq:TLC_DMR}, \eqref{eq:TLCF_UI} replaced by the weaker conditions
$\max_{k\in\intent{1,m}}\sup_{\gri\in\inc^k}\E{\abs{H_{i,k}}}<+\infty$ and \eqref{eq:condition_sur_beta} replaced by the weaker condition $k^2\beta\pr{k}\to 0$. Note that the result of \cite{zbMATH07902528} addresses the case of Hilbert space-valued $U$-statistics while ours only deals with real-valued ones. On one hand, the method of \cite{zbMATH07902528} does not seem to be extendable to higher order $U$-statistics. On the other hand, our method does not seem to be extendable to vector valued kernels.

Finally, notice that our result also applies when 
$h\pr{X_{\gri}}$ does not necessarily have a moment of order $2+\delta$ for some positive $\delta$.
\section{Hölderian invariance principle} \label{sec:TLC_Hold}
 Consider the process $\pr{W_{m,n}\pr{h,t}}_{0\leq t\leq 1}$ defined by \eqref{eq:definition_partial_sum_process}. Since the process itself and the limiting process have paths that belong to the space of  $\alpha$-H\'older continuous functions, the convergence in such spaces makes sense. 

   Hölder topology gives robust estimation as pointed out in \cite{MR4127480}. It gives also the advantage to detect a change of parameter over a short period, since a test statistic based on a H\"older norm takes large values in case of change of parameter. We refer the reader to \cite{zbMATH02117810} and the references therein for more details.
   A pioneer work on invariance principle in H\"older spaces for sequences of random variables has been done by Ra\v{c}kauskas and Suquet \cite{zbMATH02118866,zbMATH02083467}, where a necessary and sufficient condition for the convergence of $\pr{\sqrt n\pr{W_{1,n}\pr{t}-\E{W_{1,n}\pr{t}}}}_{0\leq t\leq 1}$ in H\"older having a modulus of regularity of the form $t^\alpha L\pr{1/t}$ where $\alpha\in (0,1/2)$ and under some assumptions for $L$ has been given. When $L$ is constant, the necessary and sufficient condition reads as 
   \[
   \lim_{t\to +\infty}t^{p\pr{\alpha}}
   \PP\pr{\abs{h\pr{X_1}}>t}=0, \quad p\pr{\alpha}=\frac 1{\frac 12-\alpha}.
   \]
   Several works investigated the convergence of such partial sum processes for stationary weakly dependent sequences: mixing sequences \cite{zbMATH06705457}, $\alpha$-dependent sequences \cite{zbMATH06673296}, projective condition in the spirit of Hannan \cite{zbMATH06518045}, Maxwell and Woodroofe \cite{zbMATH06973953}.

 We denote by $\Hca_\alpha\pr{[0,1]}$ the space of Hölder continuous 
functions on $[0,1]$, that is, the set of functions $x\colon [0,1]\to 
\R$ such that 
\begin{equation*}
\norm{x}_\alpha:=\abs{x\pr{0}}+ 
\sup_{s,t\in[0,1],s<t}\frac{\abs{x\pr{s}-x\pr{t}}}{\pr{s-t }^\alpha}<\infty.
\end{equation*}
In \cite{MR4880702}, Theorem~2.3, it was shown that when the sequence $\pr{X_i}_{i\in \Z}$ is i.i.d., 
a sufficient condition for the convergence of 
$\pr{ \frac{\sqrt{n}}{\binom nm}\pr{W_{m,n}\pr{h,t}-\E{W_{m,n}\pr{h,t}}} }_{0\leq t\leq 1}$ in $\Hca_\alpha$ is
\begin{equation}\label{eq:cond_suffisante_PI_holderien_Ustats}
 \lim_{t\to\infty}t^{p\pr{\alpha}}\PP\pr{\abs{h\pr{X_1,\dots,X_m} }>t   
}=0.
\end{equation}
As far as we know, the dependent case has not been addressed yet. Under \eqref{eq:condition_sur_beta},  a reinforced version of \eqref{eq:TLC_DMR} and a condition
in the spirit of \eqref{eq:cond_suffisante_PI_holderien_Ustats}, we can show the wanted convergence. 
 Let 
\begin{equation*}
    \alpha^{-1}\pr{u}=\inf\ens{k\in\N^*, \alpha\pr{k}\leq u}.
\end{equation*}
\begin{Theorem}\label{thm:TLC_Holder}
Let $\alpha\in\pr{0,1/2}$ and define $p\pr{\alpha}:=\pr{1/2-\alpha}^{-1}$. 
Suppose that \eqref{eq:condition_sur_beta} is satisfied as well as the the following conditions:
\begin{equation}\label{eq:DMR_condition_Holder}
    \lim_{s\to +\infty} s^{p\pr{\alpha}-1}
    \int_0^1 Q_{h_1\pr{X_0}}\pr{u}\mathbf{1}\ens{\alpha^{-1}\pr{u}Q_{h_1\pr{X_0}}\pr{u} >s}du=0
\end{equation}
\begin{equation}\label{eq:tail_condition_Holder}
    \lim_{t\to +\infty }t^{p\pr{\alpha}}\max_{k\in \intent{1,m} }\sup_{\gri\in \inc^k}
    \PP\pr{\abs{ H_{\gri,k}}>t }=0.
\end{equation}
Then the following convergence in distribution takes place: 
 \begin{equation*}
 \pr{ \frac{\sqrt{n}}{\binom nm}\pr{W_{m,n}\pr{h,t}-\E{W_{m,n}\pr{h,t}}} }_{0\leq t\leq 1}
     \to \pr{m\sigma t^{m-1}B_t}_{0\leq t\leq 1}\mbox{ in }\Hca_\alpha \pr{[0,1]},
 \end{equation*}
 where $\pr{B_t}_{0\leq t\leq 1}$ is a standard Brownian motion and $\sigma^2$ 
 is given by \eqref{eq:definition_de_sigma}.
\end{Theorem}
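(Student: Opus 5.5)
We follow the same scheme as for Theorem~\ref{thm:TLCF_C} and apply Hoeffding's decomposition \eqref{eq:Hoeffding_centered} to each partial $U$-statistic $\sum_{\gri\in\inc^j_m}h\pr{X_{\gri}}$, $j\in\intent{m,n}$. The normalized process then splits as $A_n+\sum_{c=2}^m R_{n,c}$. The linear part $A_n\pr{t}$ is, up to the deterministic factor $\frac{\sqrt n}{\binom nm}\binom{j}{m}\frac mj$ at $t=j/n$, the partial sum $\sum_{i\le j}\pr{h_1\pr{X_i}-\E{h_1\pr{X_0}}}$. Each remainder $R_{n,c}\pr{t}$ is the interpolation of $\frac{\sqrt n}{\binom nm}\binom jm\binom mc\binom jc^{-1}U_{c,j}\pr{\pi_{c,m}\pr{h}}$.

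\textbf{Linear part.} The process $n^{-1/2}\sum_{i\le nt}\pr{h_1\pr{X_i}-\E{h_1\pr{X_0}}}$ (polygonal version) converges to $\sigma B$ in $\Hca_\alpha$. This follows from the known H\"olderian invariance principle for strongly mixing sequences \cite{zbMATH06705457}, whose assumption is exactly the quantile condition \eqref{eq:DMR_condition_Holder}. Write $t=j/n$. The weight $\frac{\sqrt n}{\binom nm}\binom jm \frac mj$ equals $m\,\pr{j/n}^{m-1}n^{-1/2}\pr{1+O\pr{1/j}}$, so $A_n$ is the product of the partial sum process with a smooth deterministic function $g_n\pr{t}\to m t^{m-1}$ in $C^1$, plus a correction. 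Multiplication by a $C^1$ function is continuous on $\Hca_\alpha$. Interpolation issues and the $O\pr{1/j}$ terms only have to be controlled on the grid, where H\"older norms of polygonal lines are equivalent to discrete ones (Ra\v{c}kauskas--Suquet). The correction is handled by a sup bound on the partial sums, which follows from the same weak invariance principle. Hence $A_n\to m\sigma t^{m-1}B_t$ in $\Hca_\alpha$.

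\textbf{Remainders.} For $c\in\intent{2,m}$, it suffices by the Ra\v{c}kauskas--Suquet sequential characterization of the $\Hca_\alpha$ norm of polygonal functions to show that
\[
\max_{0\le \ell\le \log_2 n}2^{\ell\alpha}\max_{0\le k<2^\ell}\abs{R_{n,c}\pr{\tfrac{k+1}{2^{\ell}}}-R_{n,c}\pr{\tfrac{k}{2^\ell}}}\to 0
\]
in probability. The increment of $U_{c,j}\pr{\pi_{c,m}\pr{h}}$ between $j_1$ and $j_2$ is a sum over $\gri\in\inc^c_{j_2}$ with $i_c>j_1$. The deterministic weights are smooth in $j$, so the weight variation contributes a term bounded by a maximum of $U_{c,j}$.

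We then use the deviation inequalities of Section~\ref{sec:tools} for increments and maxima of canonical $U$-statistics. We truncate $\pi_{c,m}\pr{h}$ at a level $\tau_{n,\ell}$ and recentre to keep the kernel canonical. The bounded part is treated through the covariance inequality under \eqref{eq:condition_sur_beta}. It gives a second-moment bound for an increment over a block of length $L=n2^{-\ell}$ of order $L\,j^{c-1}\tau^2$ times the normalization, which is smaller by a factor $n^{-(c-1)}$ relative to the linear part. A Markov inequality at power $2$, or a higher power if needed, summed over the $2^\ell$ blocks and the $\log n$ levels then gives a negligible contribution. The unbounded part is bounded by $\sum \E{\abs{H_{\gri,k}}\mathbf 1\{\abs{H_{\gri,k}}>\tau\}}$, and with \eqref{eq:tail_condition_Holder} this is $o\pr{\tau^{1-p(\alpha)}}$. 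The results of Section~\ref{sec:tools} show that the tail condition passes from $h$ to the kernels $\pi_{c,m}\pr{h}$ via the variables $H_{\gri,k}$.

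\textbf{Main obstacle.} The delicate step is choosing $\tau_{n,\ell}$ so that both parts vanish uniformly over the scales $\ell$, with only the weak tail condition \eqref{eq:tail_condition_Holder} available and no moment of order $p\pr{\alpha}$. The small scales $\ell\approx\log_2 n$ are the critical ones, since there the increment involves only a few indices. I would first try $\tau=\eps\,\pr{n2^{-\ell}}^{1/2-\alpha}\cdot n^{c-1}$-type levels, matching the i.i.d.\ argument of \cite{MR4880702}, and check that the extra factor $n^{-(c-1)}$ gained from degeneracy absorbs the $\beta$-mixing covariance terms.
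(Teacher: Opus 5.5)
\textbf{There is a genuine gap: the decisive estimate is left open, and the direction you propose for it fails.}

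Your architecture matches the paper's:
\begin{itemize}
\item Hoeffding's decomposition;
\item the linear part from \cite{zbMATH06705457} under \eqref{eq:DMR_condition_Holder}, plus a comparison of the weights with $mt^{m-1}$;
\item for $c\in\intent{2,m}$, a union bound over dyadic levels $\ell\leq\log_2 n$ and $2^\ell$ blocks, with truncation, Chebyshev on the bounded canonical part (Proposition~\ref{prop:covariance_inequality}) and Markov on the rest.
\end{itemize}

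Your bookkeeping at the key step is wrong. Under \eqref{eq:condition_sur_beta} alone, the factor $\sum_{k\leq j}k^{c-1}\beta\pr{k}$ in Proposition~\ref{prop:covariance_inequality} is only $O\pr{j^{c-2}}$. So a truncated block increment has second moment of order $Ln^{2c-3}\tau^2$, not $Lj^{c-1}\tau^2$. There is no gain $n^{-\pr{c-1}}$ relative to the linear part, only $n^{-1}$.

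Take threshold $n^{c-1/2}2^{-\ell\alpha}\eps$ and $L=n2^{-\ell}$. Level $\ell$ then contributes $C\tau^22^{2\ell\alpha}/\pr{n\eps^2}$ from the bounded part. Since a block increment has at most $Ln^{c-1}$ summands, it contributes $Cn^{1/2}2^{\ell\alpha}\tau_c\pr{\tau}/\eps$ from the unbounded part, where $\tau_c\pr{R}:=\sup_{\gri\in\inc^c}\E{\abs{\pi_{c,m}\pr{h}\pr{X_{\gri}}}\mathbf{1}\ens{\abs{\pi_{c,m}\pr{h}\pr{X_{\gri}}}>R}}$. Your tentative level $\eps\pr{n2^{-\ell}}^{1/2-\alpha}n^{c-1}$ makes the first contribution of order $n^{2c-2-2\alpha}2^{-\ell\pr{1-4\alpha}}$, which diverges at every scale. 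Dropping $n^{c-1}$ but keeping a level that shrinks with the block also fails: at $\ell\approx\log_2 n$ the level is $O\pr{1}$, and the second contribution is of order $n^{1/2+\alpha}\tau_c\pr{O\pr{1}}$.

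\textbf{The missing idea: the truncation level must not depend on the scale.} Take $R=\eta n^{1/2-\alpha}$ for every $\ell$; this is Proposition~\ref{prop:tail_inequality} with $a=\ent{nk2^{-\ell}}$ and $N+a\leq n$. Both series in $\ell$ are geometric and dominated by the finest scale. This gives a bound $C\pr{\eps}\pr{R^2n^{2\alpha-1}+n^{1/2+\alpha}\tau_c\pr{R}}=C\pr{\eps}\eta^2+C\pr{\eps}n^{1/2+\alpha}\tau_c\pr{\eta n^{1/2-\alpha}}$, with $C\pr{\eps}$ independent of $n$. Proposition~\ref{prop:tail_Holder} transfers \eqref{eq:tail_condition_Holder} to $\pi_{c,m}\pr{h}$, so $\tau_c\pr{R}=o\pr{R^{1-p\pr{\alpha}}}$. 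Since $\pr{1/2-\alpha}\pr{p\pr{\alpha}-1}=1/2+\alpha$, the second term is $o\pr{1}$; then let $\eta\to0$. This is exactly the paper's argument. No higher-order Markov bound is needed, and none is available from Section~\ref{sec:tools}.

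\textbf{A second omission: the bias terms.} Your remainders use the uncentred $U_{c,j}\pr{\pi_{c,m}\pr{h}}$. For dependent data $\E{\pi_{c,m}\pr{h}\pr{X_{\gri}}}\neq 0$, so the decomposition also produces deterministic bias processes, which your plan never treats. Their H\"older norm is at most of order $n^{-\pr{c-1/2-\alpha}}\sum_{\gri\in\inc^c_n}\abs{\E{\pi_{c,m}\pr{h}\pr{X_{\gri}}}}$. This tends to $0$ by the second part of Proposition~\ref{prop:neg_terme_biais}, using \eqref{eq:tail_condition_Holder}.
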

Condition \eqref{eq:DMR_condition_Holder} is required for the convergence of the linear part, and the combination of \eqref{eq:tail_condition_Holder} with \eqref{eq:condition_sur_beta} in order to show the negligibility of the remaining terms.

\section{Tools for the proofs}\label{sec:tools}
\subsection{Second moment inequalities}

In order to prove Proposition~\ref{prop:neg_terme_biais}, we need the following lemma which follows, as pointed out in \cite{MR1624866}, from the characterization of $\beta$-mixing coefficients given page 193 in \cite{MR137141} and an induction argument given in Lemma~2 
in \cite{MR777842}. See also Lemma~1 in \cite{MR418179}.
\begin{Lemma}\label{lem:comparaison_esperances}
Let $\pr{X_j}_{j\geq 1}$ be a strictly stationary sequence of random variables taking values in a measurable space $\pr{S,\Sca}$. Let $f\colon S^m\to \R$ be a measurable function, where $S^m$ is endowed with the product $\sigma$-algebra. 
Let $\pr{m\pr{u,v}}_{\substack{1\leq u\leq \ell\\ 1\leq v\leq r_u}}$ be integers such that 
\begin{equation*}
    m\pr{1,1}<\dots<m\pr{1,r_1}<m\pr{2,1}<\dots< m\pr{2,r_2}<\dots< m\pr{\ell,1}<\dots<m\pr{\ell,r_\ell}.
\end{equation*}
Let $r=\sum_{u=1}^\ell r_u$. Let $\pr{\xi_j}_{j=1}^r$ be a sequence of random variables such that for each $j$, $\xi_j$ has the same distribution as $X_1$ and 
\begin{multline*}
    \mathcal{L}\pr{\pr{\xi_{m\pr{1,1}},\dots,\xi_{m\pr{1,r_1}},\xi_{m\pr{2,1}},
\dots,\xi_{m\pr{2,r_2}},\dots,\xi_{m\pr{\ell,1}},\dots,\xi_{m\pr{\ell,r_\ell}}  }}\\
=\mathcal{L}\pr{X_{m\pr{1,1}},\dots,X_{m\pr{1,r_1}} }
\otimes \dots\otimes \mathcal{L}\pr{X_{m\pr{\ell,1}},\dots,X_{m\pr{\ell,r_\ell}} }.
\end{multline*}
Suppose that there exists a constant $R$ such that for each $x_1,\dots,x_m\in S$,  $\abs{f\pr{x_1,\dots,x_m}}\leq R$.
Then the following inequality takes place
\begin{multline}\label{eq:diff_esperances_f_bornee}
    \abs{\E{f\pr{X_{m\pr{1,1}},\dots,X_{m\pr{\ell,r_\ell}} }}-
    \E{f\pr{\xi_{m\pr{1,1}},\dots,\xi_{m\pr{\ell,r_\ell}} }}}
\\    \leq 2\sum_{i=1}^{\ell-1}
    \beta\pr{m\pr{i+1,1}-m\pr{i,r_i}}R.
\end{multline}
\end{Lemma}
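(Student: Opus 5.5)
The plan is an induction on the number of blocks $\ell$, replacing one block at a time by an independent copy. The only tool needed is the characterization of $\beta$-mixing coefficients recalled from \cite{MR137141}. For two sub-$\sigma$-algebras $\Aca,\Bca$ generated by random variables $U$ and $V$ with values in standard Borel-type spaces, one has
\[
\beta\pr{\Aca,\Bca}=\norm{\PP_{\pr{U,V}}-\PP_U\otimes\PP_V}_{TV},
\]
where the total variation norm is half the total mass of the signed measure. Consequently, for every measurable bounded $g$ with $\abs{g}\leq R$,
\[
\abs{\E{g\pr{U,V}}-\int g\, d\pr{\PP_U\otimes \PP_V}}\leq 2R\,\beta\pr{\sigma\pr{U},\sigma\pr{V}}.
\]
If one prefers to avoid regularity assumptions on $S$, the partition definition of $\beta$ can be used directly. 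One first proves the bound for indicator functions of finite unions of rectangles, and then extends it by approximation to all of $S^m$ with the product $\sigma$-algebra.

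For the induction, set $Y_u:=\pr{X_{m\pr{u,1}},\dots,X_{m\pr{u,r_u}}}$ for $u\in\intent{1,\ell}$. For $k\in\intent{1,\ell}$, let $\mu_k$ be the law of $\pr{Y_1,\dots,Y_k}$ and let $\nu_k:=\mathcal L\pr{Y_1}\otimes\dots\otimes\mathcal L\pr{Y_k}$. The claim to prove by induction on $k$ is
\[
\abs{\int g\,d\mu_k-\int g\,d\nu_k}\leq 2R\sum_{i=1}^{k-1}\beta\pr{m\pr{i+1,1}-m\pr{i,r_i}},
\]
for every measurable $g$ with $\abs{g}\leq R$. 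The case $k=1$ is trivial, and the case $k=\ell$ is exactly \eqref{eq:diff_esperances_f_bornee}, since the vector of the $\xi$'s has law $\nu_\ell$.

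For the step from $k$ to $k+1$, insert the intermediate measure $\mu_k\otimes\mathcal L\pr{Y_{k+1}}$. The first difference is $\int g\,d\mu_{k+1}-\int g\,d\pr{\mu_k\otimes \mathcal L\pr{Y_{k+1}}}$. Here $\pr{Y_1,\dots,Y_k}$ is $\Fca_1^{m\pr{k,r_k}}$-measurable and $Y_{k+1}$ is $\Fca_{m\pr{k+1,1}}^\infty$-measurable. The displayed inequality together with the definition of $\beta\pr{\cdot}$, which takes a supremum over $\ell$, bounds this difference by $2R\beta\pr{m\pr{k+1,1}-m\pr{k,r_k}}$. For the second difference, write $G\pr{y_1,\dots,y_k}:=\int g\pr{y_1,\dots,y_k,y}\,d\mathcal L\pr{Y_{k+1}}\pr{y}$. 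By Fubini, $G$ is measurable and bounded by $R$, and
\[
\int g\,d\pr{\mu_k\otimes\mathcal L\pr{Y_{k+1}}}-\int g\,d\nu_{k+1}=\int G\,d\mu_k-\int G\,d\nu_k.
\]
The induction hypothesis bounds this by $2R\sum_{i=1}^{k-1}\beta\pr{m\pr{i+1,1}-m\pr{i,r_i}}$, and adding the two bounds completes the step.

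The main obstacle is the coupling/total-variation characterization of $\beta$, that is, passing from the partition definition to the bound for general bounded measurable $g$ on the product space. I would first handle simple functions measurable with respect to $\sigma\pr{U}\otimes\sigma\pr{V}$. For such functions, the difference of expectations is a sum over a partition by rectangles $A_i\times B_j$, and it is bounded by
\[
2R\cdot\frac12\sum_{i,j}\abs{\PP\pr{A_i\cap B_j}-\PP\pr{A_i}\PP\pr{B_j}}.
\]
Since general measurable sets of the product can be approximated in measure by finite unions of rectangles, this gives $\norm{\PP_{\pr{U,V}}-\PP_U\otimes\PP_V}_{TV}\leq\beta$, and the extension to general bounded $g$ then follows by uniform approximation with simple functions.
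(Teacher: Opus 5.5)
Your proposal is correct and is essentially the argument the paper relies on. The paper gives no proof of its own: it cites the total-variation characterization of $\beta$ from \cite{MR137141} together with the block-by-block induction of Lemma~2 in \cite{MR777842} (see also \cite{MR418179}), and that induction is exactly your telescoping through $\mu_k\otimes\mathcal{L}\pr{Y_{k+1}}$.

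One wording point in your last paragraph needs tightening. The rectangle-sum bound applies to simple functions that are constant on the cells of a rectangle partition, not to arbitrary product-measurable simple functions. The approximation of product-measurable sets should then be taken with respect to the finite measure $\PP_{\pr{U,V}}+\PP_U\otimes\PP_V$. Read this way, the argument is complete and needs no regularity assumption on $S$.
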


 As pointed out before, a fundamental tool to control the contribution of the terms in Hoeffding's decomposition is an inequality that bounds the moment of 
 order two of increments of a $U$-statistic with canonical kernels. 
 This allows the derivation of a maximal inequality that plays a crucial role 
 in functional limit theorems. The obtained bound is expressed 
 in terms of the $\beta$-mixing coefficients and a truncation level $R$. Let us start by the bounded case. 
\begin{Proposition}\label{prop:covariance_inequality}
 For each $m\geq 2$, there exists a constant $C_m$ such that if  $\pr{S,\Sca}$ is a measurable space, $\pr{X_i}_{i\in\Z}$ is a strictly stationary sequence taking   $h\colon S^m\to \R$ is a canonical kernel such that $\sup_{x_1,\dots,x_m\in S}\abs{h\pr{x_1,\dots,x_m}}\leq R$ and $N\geq m$, then 
 \begin{align}\label{eq:inegalite_accroissements_h_canonique_h_bornee}
     \E{\pr{U_{m,N+a}\pr{h}-U_{m,a}\pr{h}}^2}&
     \leq C_{m} \pr{N+a}^{ m-1 } N\pr{1+\sum_{k=1}^{N+a}k^{m-1} 
      \beta\pr{k} }R^2,\\
     \E{\max_{m\leq n\leq N}\pr{U_{m,n}\pr{h}   }^2    }
     &\leq C_mN^{m }\pr{\log  N}^2\pr{1+\sum_{k=1}^Nk^{m-1}
      \beta\pr{k} }R^2.\label{eq:inegalite_max_h_canonique_h_bornee}
 \end{align}
\end{Proposition}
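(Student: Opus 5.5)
The plan is to expand the square and reduce to bounding, for each pair of multi-indices, the covariance $\E{h\pr{X_{\gri}}h\pr{X_{\gr{j}}}}$ via Lemma~\ref{lem:comparaison_esperances} and canonicity, and then to derive the maximal inequality from the increment bound by a dyadic chaining argument.

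\textbf{Increment bound.} We write
\[
U_{m,N+a}\pr{h}-U_{m,a}\pr{h}=\sum_{\gri\in\inc^m_{N+a},\, i_m>a}h\pr{X_{\gri}},
\]
so the second moment equals $\sum_{\gri,\gr{j}}\E{h\pr{X_{\gri}}h\pr{X_{\gr{j}}}}$, where both $\gri$ and $\gr{j}$ have largest coordinate in $\intent{a+1,N+a}$.

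Fix such a pair and merge the $2m$ indices into an ordered list $\ell_1\leq\dots\leq\ell_{2m}$, with coincident indices allowed. Let $g$ be the largest gap between consecutive distinct indices, attained at a position where we cut the list into a left block and a right block. We then distinguish two cases.
\begin{itemize}
\item Suppose some index of $\gri$ or $\gr{j}$ is isolated, i.e.\ it is separated from all other indices by gaps $\geq g'$. Cutting on both sides of it and applying Lemma~\ref{lem:comparaison_esperances} to the bounded function $f=h\otimes h$ (with bound $R^2$) replaces the expectation by one in which this coordinate is an independent copy with law $\PP_{X_0}$. Canonicity makes the latter expectation vanish. Hence $\abs{\E{h\pr{X_{\gri}}h\pr{X_{\gr{j}}}}}\leq 4R^2\beta\pr{g'}$.
\item If no index is isolated, every index lies within distance $g$ of another one. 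This constraint forces coincidences or clustering, and it reduces the number of free indices.
\end{itemize}

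Next we count the configurations. The largest index of $\gri$ ranges over $N$ values. We parametrise by the largest gap $k$ around some index. Since canonicity needs at least one index separated from its neighbours by gap $\geq k$, there are about $m-1$ free "block start" positions ranging over $\intent{1,N+a}$, and the remaining positions lie within $k$ of a block. This should give a bound of the form
\[
\sum_{k}\#\ens{\text{configurations with gap parameter }k}\,\beta\pr{k}R^2\lesssim (N+a)^{m-1}N\sum_k k^{m-1}\beta\pr{k}R^2.
\]
The configurations with no isolated index, where all points are paired up in $m$ clusters, number about $(N+a)^{m-1}N\,k^{m}$. These must be controlled by choosing the cut at the largest gap, so that the relevant power is $k^{m-1}$ together with $\beta\pr{k}$. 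The term "$1+$" accounts for fully coincident pairs ($\gri=\gr{j}$ up to small shifts), which are bounded trivially by $R^2$.

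\textbf{Main obstacle.} This combinatorial bookkeeping is the step I expect to be hardest. One must choose, for each of the $2m$-point configurations, a cut that either isolates a single coordinate (where canonicity gives zero) or yields enough coincidences, and show that the count with gap $k$ is at most $C_m(N+a)^{m-1}Nk^{m-1}$. I would first try the Yoshihara-style approach: order the points, let $k$ be the maximal gap among the $2m-1$ consecutive gaps, and count positions by fixing the remaining gaps to be $\leq k$. This gives $k^{2m-2}$ in the worst case, which is too much, so the refinement must use the fact that in the "no isolated point" case the points pair up, leaving only $m$ cluster locations and $m$ within-cluster offsets bounded by $k$.

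\textbf{Maximal inequality.} From the increment bound with $R$ fixed, we apply a dyadic (Móricz-type) chaining. Write $n$ in binary and decompose $U_{m,n}$ as a sum over $\log_2 N$ dyadic increments $U_{m,b+2^j}-U_{m,b}$. By Cauchy--Schwarz, the square of the maximum is at most $\log N$ times the sum over levels $j$ of $\sum_b\E{\pr{U_{m,b+2^j}-U_{m,b}}^2}$. At level $j$ each term is at most $C_mN^{m-1}2^j(\cdots)R^2$, and there are $N2^{-j}$ such terms, so each level contributes $C_mN^m(\cdots)R^2$. Summing over $\log N$ levels gives the stated $N^m(\log N)^2$ bound in \eqref{eq:inegalite_max_h_canonique_h_bornee}.
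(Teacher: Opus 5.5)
\textbf{There is a genuine gap in the increment bound.} Your overall architecture matches the paper's: expand the square, split off one coordinate with Lemma~\ref{lem:comparaison_esperances}, let canonicity kill the independent-block term, then count. Your dyadic chaining for \eqref{eq:inegalite_max_h_canonique_h_bornee} is fine; it does by hand what the Serfling--M\'oricz inequality does, which the paper cites after checking superadditivity of $c_{a,n}$.

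The problem is \eqref{eq:inegalite_accroissements_h_canonique_h_bornee}, whose whole content is the counting, and you leave it at ``this should give''. Your case split is not well defined: $g'$ is never fixed, and ``every index lies within distance $g$ of another'' holds for every configuration, since $g$ is the largest gap. The remedy you propose for clustered configurations also fails, because cutting at the largest gap between two multi-point blocks does not make the main term vanish. Take $m=2$, $\gri=\pr{i,i'}$, $\gr{j}=\pr{i+1,i'+1}$ with $i'-i$ large. The independent-block quantity $\E{h\pr{\xi_i,\xi_{i'}}h\pr{\xi_{i+1},\xi_{i'+1}}}$ keeps the internal dependence of $\pr{\xi_i,\xi_{i+1}}$ and of $\pr{\xi_{i'},\xi_{i'+1}}$, and need not be $0$ (take $X_{i+1}$ close to $X_i$). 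In general canonicity only helps when the split-off block is a single non-repeated index. So Lemma~\ref{lem:comparaison_esperances} applied at the largest gap yields no $\beta\pr{k}$ bound at all.

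\textbf{The missing idea} (Yoshihara, Arcones, used in the paper) is to take as parameter not a gap but a maximal isolation distance over $m$ designated points. Let $i_1\leq\dots\leq i_{2m}$ be the merged sorted indices, and set
\begin{itemize}
\item $j_1=i_2-i_1$,
\item $j_\ell=\min\ens{i_{2\ell-1}-i_{2\ell-2},i_{2\ell}-i_{2\ell-1}}$ for $2\leq\ell\leq m-1$,
\item $j_m=i_{2m}-i_{2m-1}$,
\end{itemize}
and $k=\max_\ell j_\ell=j_{\ell_0}$.

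The designated point realizing $k$ is at distance at least $k$ from every other index. So if $k\geq 1$ it is non-repeated, and isolating it gives $\abs{\E{h\pr{X_{\gri}}h\pr{X_{\gr{j}}}}}\leq 4\beta\pr{k}R^2$ with vanishing main term.

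For the count, choose $i_1,\dots,i_{2m-2}$ from the left and then $i_{2m}$, $i_{2m-1}$ from the right:
\begin{itemize}
\item For each pair $\pr{i_{2\ell-1},i_{2\ell}}$ with $\ell\neq\ell_0$, the constraint $j_\ell\leq k$ forces one point of the pair within $k$ of an already chosen neighbour. This gives $O\pr{\pr{N+a}k}$ choices, or $O\pr{Nk}$ for the last pair since $i_{2m}>a$.
\item For the pair $\ell_0$, one gap equals $k$ exactly, giving only $O\pr{N+a}$ choices, or $O\pr{N}$ if $\ell_0=m$.
\end{itemize}
The product $O\pr{\pr{N+a}^{m-1}Nk^{m-1}}$ is where the power $k^{m-1}$ comes from, rather than your $k^m$ or $k^{2m-2}$. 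Your paired clusters are not a separate case; they are simply counted at small $k$. The configurations with $k=0$ number $O\pr{\pr{N+a}^{m-1}N}$, are bounded trivially by $R^2$, and produce the ``$1+$'' term.
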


\begin{proof}[Proof of Proposition~\ref{prop:covariance_inequality}]
Let us start by showing \eqref{eq:inegalite_accroissements_h_canonique_h_bornee}. The proof follows the ideas of that of Lemma~3 in \cite{MR1624866}, with the main difference that  we have to control the second moment of a difference of a $U$-statistic. We will rest mainly on Arcones' proof and explain what has to be changed.

One has 
\[
 \E{\pr{U_{m,N+a}\pr{h}-U_{m,a}\pr{h}}^2}=
 \sum_{\gri\in\inc^m_{N+a}: i_m\geq a}
 \sum_{\gr{i'}\in\inc^m_{N+a}: i'_m\geq a}
 \E{h\pr{X_{\gri}}h\pr{X_{\gr{i'}}}}.
\]
which can be rewritten  in terms of permutations as 
\begin{multline}\label{eq:etape_1_inegalite_cov}
 \E{\pr{U_{m,N+a}\pr{h}-U_{m,a}\pr{h}}^2}\\
 \leq \sum_{\sigma\in\Sca\pr{2m}}\sum_{\substack{1\leq i_1\leq i_2\leq \dots \leq i_{2m}\leq N+a\\ i_{\sigma\pr{1}}<\dots <i_{\sigma\pr{m}}, i_{\sigma\pr{m}}>a\\
 i_{\sigma\pr{m+1}}<\dots <i_{\sigma\pr{2m}}, i_{\sigma\pr{2m}}>a}}
 \E{h\pr{X_{i_{\sigma\pr{1}}},\dots, X_{i_{\sigma\pr{m}}}  }  h\pr{X_{i_{\sigma\pr{m+1}}},\dots, X_{i_{\sigma\pr{2m}}}}},
 \end{multline}
 where $\Sca\pr{2m}$ denotes the set of permutations from $\intent{1,2m}$ to itself. Let $\sigma\in\Sca\pr{2m}$ be fixed.
Let $I_{\sigma}$ be the set of $\gri:=\pr{i_u}_{u=1}^{2m}$ such that $1\leq i_1\leq i_2\leq \dots \leq i_{2m}\leq N+a$, $i_{\sigma\pr{1}}<\dots <i_{\sigma\pr{m}}, i_{\sigma\pr{m}}>a$, $i_{\sigma\pr{m+1}}<\dots <i_{\sigma\pr{2m}}$ and  $i_{\sigma\pr{2m}}>a$. Define for $\gri:=\pr{i_u}_{u=1}^{2m}\in I$ the integers
 \begin{align*}
     j_1\pr{\gri}&=i_2-i_1,\\
     j_\ell\pr{\gri}&=\min\ens{i_{2\ell-1}-i_{2\ell-2},i_{2\ell}-i_{2\ell -1}},\quad \ell\in\intent{2,m-1}\\
     j_m\pr{\gri}&=i_{2m}-i_{2m-1}.
 \end{align*}

     In view of \eqref{eq:etape_1_inegalite_cov}, 
     it suffices to show that there exists a constant $C_{m} $ such that for each $\sigma\in\Sca\pr{2m}$ and  each 
     $\ell_0\in\intent{1,m}$,
     \begin{multline}\label{eq:etape_2_inegalite_cov}
       \sum_{\substack{\gri:=\pr{i_u}_{u=1}^{2m}\in I_{\sigma}  \\ 
       j_{\ell_0}\pr{\gri}\geq \max_{1\leq \ell\leq m}
       j_\ell\pr{\gri}
       } }  \E{h\pr{X_{i_{\sigma\pr{1}}},\dots, X_{i_{\sigma\pr{m}}}  }  h\pr{X_{i_{\sigma\pr{m+1}}},\dots, X_{i_{\sigma\pr{2m}}}}}\\
          \leq C_{m}\pr{N+a}^{m-1}N\pr{1+ \sum_{k=1}^{N+a}k^{m-1} \beta\pr{k}R^2}.
     \end{multline}
     We first consider the case where the indexes are not repeated.
We will apply Lemma~\ref{lem:comparaison_esperances} to the function $f\colon S^{2m}\to \R$ defined by 
\begin{equation*}
    f\pr{x_1,\dots,x_{2m}}=h\pr{x_1,\dots,x_m}
    h\pr{x_{m+1},\dots,x_{2m}}.
\end{equation*}
Note that if there is a block which consists of only one random variable, that is, if one of the $r_u$ is equal to one and the corresponding index is not repeated, then $\E{f\pr{\xi_{m\pr{1,1}},\dots,\xi_{m\pr{\ell,r_\ell}} }}=0$. 

Suppose that $\ell_0=1$. Let $\gri\in I_\sigma$. We use 
Lemma~\ref{lem:comparaison_esperances} with the blocks $\ens{i_1}$, $\ens{i_2,\dots,i_m}$. 
 Let $k:=i_2-i_1\in\intent{1,N+a}$. Then $\pr{i_1,i_2}$ can take at most $N+a$ different values. 
 Let $\ell\in\intent{2,m-1}$. Suppose that  $i_{2\ell-1}-i_{2\ell-2}<i_{2\ell}-i_{2\ell -1}$. 
 Since $j_1\pr{\gri}\geq j_\ell\pr{\gri}=i_{2\ell-1}-i_{2\ell-2}$, we have that 
 $i_{2\ell-1}-i_{2\ell-2}\leq k$ hence $i_{2\ell-1}$ can take at most $k$ values and $i_{2\ell}$ at most $N+a$. If 
 $i_{2\ell-1}-i_{2\ell-2}\geq i_{2\ell}-i_{2\ell -1}$. 
 then $i_{2\ell}$ can take at most $k$ values and 
 $i_{2\ell-1}$ at most $N+a$. 
 Notice also that $i_{2m}-i_{2m-1}\leq i_2-i_1$ hence there are at most $k$ possibilities for $i_{2m-1}$ and $N$ for $i_{2m-1}$ since $i_{2m}$ corresponds to the largest element of $\ens{i_{\sigma\pr{m} }, i_{\sigma\pr{2m}}}$, which is in any case between $a$ and $N+a$. The number of summands does not exceed $\pr{N+a}^{m-1}Nk^{m-1}$  which are all, by Lemma~\ref{lem:comparaison_esperances}, smaller than 
 $C_m\beta\pr{k}R^2$.

Suppose that $\ell_0\in\intent{2,m-1}$. Let $\gri\in I_{\sigma}$. This time, we apply Lemma~\ref{lem:comparaison_esperances} with the blocks 
$\ens{i_1,\dots,i_{2\ell-2}}$, $\ens{i_{2\ell-1}}$ and 
$\ens{i_{2\ell},\dots,i_{2m}}$. Let $k:=j_{\ell_0}\pr{\gri}$.
As before, there are $\pr{N+a}k$ possibilities for $\pr{i_1,i_2}$, as well as for the terms $\pr{i_{2\ell-1},i_{2\ell}}$ for  $\ell\in\intent{2,m-1}\setminus\ens{\ell_0}$. 
For the terms $i_{2\ell_0-1}$ and $i_{2\ell_0}$, we have that 
if $i_{2\ell_0-1}-i_{2\ell_0-2}>i_{2\ell_0}-i_{2\ell_0-1}$, 
there are $k$ possibilities for $i_{2\ell_0}$ and $N+a$ for $i_{2\ell_0-1}$ and the other way around if the previous inequality is reversed.
Like before, there are at most $k N$ possibilities for $\pr{i_{2m-1},i_{2m}}$ hence \eqref{eq:etape_2_inegalite_cov} 
also holds in this case. 

Finally, suppose that $\ell_0=m$ and let $\gri\in I_{\sigma}$.
We apply Lemma~\ref{lem:comparaison_esperances} with the blocks $\ens{i_1,\dots,i_{2m-1}}$ and $\ens{i_{2m}}$.
Let $k:= i_{2m}-i_{2m-1}$.
For $i_1,\dots,i_{2m-2}$, we have $\pr{N+a}^{m-2}k^{m-2}$ and as before, the number of possibilities for $\pr{i_{2m-1},i_{2m}}$ is at most $k N$.

The case where there are repeated indexes can be treated in a similar way. Let us illustrate it in two cases. When all the indexes are repeated, we have at most $\pr{N+a}N^{m-1}$ terms
that are bounded by $\sup_{\gri\in\inc^m}\norm{h\pr{X_{\gri}}}_2^2$, hence the contribution does not exceed $\pr{N+a}N^{m-1}R^2$. Let us now deal with the case where only the first  indices are now repeated.  We thus have to control terms of the form 
\[\sum_{1\leq i_1\neq j_1<j_2<\dots<j_m\leq N+a, j_m\geq a}
\E{h\pr{X_{i_1},X_{j_2},\dots,X_{j_m}}
h\pr{X_{j_1},X_{j_2},\dots,X_{j_m}}}.\]
This time, we cannot use Lemma~\ref{lem:comparaison_esperances} with blocks of the form $\ens{i_1,j_1,\dots,j_k}$ and 
$\ens{j_{k+1}}$ because the resulting random variable with the independent blocks would not be centered. Suppose without loss of generality that $i_1<j_1$ and let 
$k:=\max\ens{j_1-i_1,j_2-j_1}$. If $j_1-i_1<j_2-j_1$ we use Lemma~\ref{lem:comparaison_esperances} with the blocks 
$\ens{i_1,j_1}$ and $\ens{j_2,\dots,j_m}$ and if $j_1-i_1\geq j_2-j_1$, with the blocks $\ens{i_1}$ and $\ens{j_1,\dots,j_m}$. 
The number of involved terms is $N$ for $j_m$, $\pr{N+a}^{m-1}$ 
for the indexes $j_2,\dots,j_{m-1}$ and for $\pr{i_1,j_1}$, we have at most $k\pr{N+a}$ possibilities. The remaining cases, namely, where some indexes are repeated and others not can be treated in the same way.
This ends the proof of \eqref{eq:inegalite_accroissements_h_canonique}.

Let us now prove \eqref{eq:inegalite_max_h_canonique_h_bornee}. 
We will use the main result of \cite{MR268938}, which reads as follows. Let $\pr{Y_i}_{i\geq 1}$ be a sequence of random variables having a finite moment of order $2$. Denote $S_{a,k}:=\sum_{i=a+1}^{a+k}Y_i$, $M_{a,n}:=\max_{1\leq k\leq n}\abs{S_{a,k}}$. Suppose that there exists a double index sequence 
$\pr{c_{a,k}}_{a,k\geq 1}$ such that 
\begin{equation}\label{eq:condition_double_indexed}
    c_{a,k}+c_{a+k,\ell}\leq c_{a,k+\ell}
\end{equation}
and for each $a\geq a_0$, $n\geq 1$, $\E{S_{a,n}^2}\leq c_{a,n}$. Then 
for each $a\geq a_0$, $n\geq 1$, $\E{M_{a,n}^2}\leq \pr{\log_2\pr{2n}}^2c_{a,n}$ (in \cite{MR268938}, the sequence is assumed to be a function of the distribution function of $Y_{a+1},\dots,Y_{a+n}$ but actually, only the fact that $\pr{c_{a,k}}_{a,k\geq 1}$ satisfies the inequalities  \eqref{eq:condition_double_indexed} and  $\E{S_{a,n}^2}\leq c_{a,n}$ matters). Define 
\begin{equation*}
    c_{a,n}:=C_{m} \pr{n+a}^{ m-1 } n\pr{1+\sum_{i=1}^{n+a}i^{m-1}\beta\pr{i}}R^2, 
\end{equation*}
where $C_m$ is as in \eqref{eq:inegalite_accroissements_h_canonique}
and $Y_i=\sum_{1\leq j_1<\dots<j_{m-1}<i}h\pr{X_{j_1},\dots,X_{j_{m-1}},X_i  }$. We have already seen that $\E{S_{a,n}^2}\leq c_{a,n}$ holds. It remains to check that $\pr{c_{a,k}}_{a,k\geq 1}$ satisfies \eqref{eq:condition_double_indexed}. Clearly, this condition is scale invariant, hence it suffices to show it is satisfied for 
\begin{equation*}
    c'_{a,n}:=  \pr{n+a}^{ m-1 } n\pr{1+\sum_{i=1}^{n+a}i^{m-1}
      \beta\pr{i} }  ,
\end{equation*}
One has 
\begin{align*}
    c'_{a,k}+c'_{a+k,\ell}&= \pr{k+a}^{ m-1 } k\pr{1+\sum_{i=1}^{k+a}i^{m-1}
      \beta\pr{i}}
      + \pr{\ell+a+k}^{ m-1 } \ell\pr{1+\sum_{i=1}^{\ell+a+k}i^{m-1}
     \beta\pr{i} } \\
      &\leq \pr{1+\sum_{i=1}^{\ell+a+k}i^{m-1}
      \beta\pr{i} }\pr{ \pr{k+a}^{ m-1 } k+ \pr{\ell+a+k}^{ m-1 }\ell } \\
      &\leq \pr{1+\sum_{i=1}^{\ell+a+k}i^{m-1}
      \beta\pr{i} }\pr{ \pr{k+a
      +\ell}^{ m-1 } k+ \pr{\ell+a+k}^{ m-1 }\ell }\\
      &=c'_{a,k+\ell}.
\end{align*}
This ends the proof of Proposition~\ref{prop:covariance_inequality}.
\end{proof}

Using truncation arguments, we can formulate the following tail inequality for $U$-statistic based on a canonical kernel.
\begin{Proposition}\label{prop:tail_inequality}
For each $m\geq 2$, there exists a constant $\kappa_m$ such that if  $\pr{S,\Sca}$ is a measurable space, $\pr{X_i}_{i\in\Z}$ is a strictly stationary sequence taking values in $S$,   $h\colon S^m\to \R$ is a canonical kernel, $a\geq 0$ and $N\geq m$, the following inequalities hold for each positive $R,t$: 
 \begin{multline}\label{eq:inegalite_accroissements_h_canonique}
 \PP\pr{\abs{U_{m,N+a}\pr{h}-U_{m,a}\pr{h}}> t}
  \\ 
     \leq R^2\frac{\kappa_{m}}{t^2} N\pr{N+a}^{2m-3}  \pr{1+\sum_{k=1}^{N+a }k
      \beta\pr{k}}+\frac{\kappa_m}{t} \frac{\pr{N+a}^{m-1}}{N^{m-3/2}} 
      \sup_{\gri\in\inc^m}\E{ 
      \abs{h\pr{X_{\gri}}}\mathbf{1}_{\abs{h\pr{X_{\gri}}}>R}
      },
 \end{multline}
 \begin{multline}\label{eq:inegalite_max_h_canonique}
     \PP\pr{\max_{m\leq n\leq N}\abs{U_{m,n}\pr{h}   }>N^{m-1/2}t    } \\
     \leq     \frac{\kappa_{m}}{t^2} \pr{\log N}^2\frac{R^2}N \pr{1+\sum_{k=1}^{N }k
      \beta\pr{k}}+\frac{\kappa_m}{t} \sqrt{N} 
      \sup_{\gri\in\inc^m}\E{ 
      \abs{h\pr{X_{\gri}}}\mathbf{1}_{\abs{h\pr{X_{\gri}}}>R}
      }.
 \end{multline}
\end{Proposition}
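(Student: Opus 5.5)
The plan is a truncation argument. A bounded piece is handled by Proposition~\ref{prop:covariance_inequality} together with Chebyshev's inequality, and the unbounded remainder by Markov's inequality in $\mathbb L^1$.

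\textbf{Splitting the kernel.} Fix $R>0$ and write $h=h^{\leq R}+h^{>R}$ with $h^{\leq R}:=h\mathbf{1}_{\abs{h}\leq R}$. Truncation destroys canonicity, so I would re-canonicalize: set $\tilde h:=\pi_{m,m}\pr{h^{\leq R}}$ and $\bar h:=\pi_{m,m}\pr{h^{>R}}$, using the operator of \eqref{eq:definition_de_pi_km}. Since $h$ is canonical, $\pi_{m,m}h=h$, so $h=\tilde h+\bar h$. Expanding the product $\prod\pr{\delta_{x_j}-\PP_{X_0}}$ shows that $\tilde h$ is canonical and bounded by $2^mR$.

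\textbf{The bounded part.} Applying \eqref{eq:inegalite_accroissements_h_canonique_h_bornee} to $\tilde h$ and using Chebyshev gives
\[
\PP\pr{\abs{U_{m,N+a}(\tilde h)-U_{m,a}(\tilde h)}>t/2}\leq \frac{C}{t^2}R^2(N+a)^{m-1}N\Bigl(1+\sum_{k\leq N+a}k^{m-1}\beta(k)\Bigr).
\]
To reach the stated form with $\sum k\beta(k)$, I bound $k^{m-2}\leq (N+a)^{m-2}$. This yields the factor $N(N+a)^{2m-3}$ in \eqref{eq:inegalite_accroissements_h_canonique}.

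For the maximal inequality \eqref{eq:inegalite_max_h_canonique} I proceed the same way, with $t$ replaced by $N^{m-1/2}t/2$. By \eqref{eq:inegalite_max_h_canonique_h_bornee} the bounded part contributes
\[
C\,N^m(\log N)^2R^2\Bigl(1+\sum_{k\leq N}k^{m-1}\beta(k)\Bigr)\big/\bigl(N^{2m-1}t^2\bigr).
\]
Bounding $k^{m-1}\leq N^{m-2}k$ gives $(\log N)^2R^2N^{-1}(1+\sum k\beta(k))$ up to a factor $N^{m-2}$. I expect this exponent bookkeeping to be the main place to be careful. The stated bound probably relies on normalizations of the indices that I would verify against the statement, possibly accepting a slightly different power of $N$ in the intermediate steps and reconciling it there.

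\textbf{The unbounded part.} I bound the increment of $U(\bar h)$ by its $\mathbb L^1$ norm and apply Markov's inequality. The number of terms in $U_{m,N+a}-U_{m,a}$ is at most $N(N+a)^{m-1}$. The maximal version, again by Markov, is at most $N^m\sup\E{\abs{\bar h(X_\gri)}}$. Each term of $\bar h$ expands into $2^m$ integrals of $h^{>R}$ in which some coordinates are replaced by independent copies distributed as $\PP_{X_0}$. I would bound these integrals by conditional expectations of $\abs{h}\mathbf{1}_{\abs h>R}$ evaluated at mixed vectors. This is exactly why the random variables $H_{\gri,k}$ are introduced. Here the supremum over $\gri\in\inc^m$ should be interpreted as covering these mixed vectors, via Lemma~\ref{lem:comparaison_esperances} or simply by the definition adopted in the paper.

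\textbf{Assembling the bounds.} Dividing by $t/2$ (respectively by $N^{m-1/2}t/2$) gives the $\mathbb L^1$ term. For the maximum this is $N^{m}/N^{m-1/2}=\sqrt N$, matching \eqref{eq:inegalite_max_h_canonique}. For the increment the count gives $N(N+a)^{m-1}$, and the displayed normalization $(N+a)^{m-1}/N^{m-3/2}$ arises after rescaling $t$ in the same way. I would align the scaling of $t$ with the intended application, namely $t$ multiplied by $N^{m-1/2}$ as the normalization suggests. Summing the two contributions and adjusting constants to $\kappa_m$ finishes the proof. The delicate points are, first, that the re-centering of the unbounded part only produces moments controlled by the supremum over mixed $H_{\gri,k}$, and second, the matching of powers of $N$.
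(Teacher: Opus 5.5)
Your approach is the paper's. You truncate at $R$ and re-canonicalize both pieces. You then apply Chebyshev and Proposition~\ref{prop:covariance_inequality} to the canonical part, which is bounded by $2^mR$, apply Markov to the rest, and use $k^{m-1}\leq (N+a)^{m-2}k$.

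For \eqref{eq:inegalite_max_h_canonique} your bookkeeping is exact, and there is no leftover factor $N^{m-2}$. Indeed $N^{m}\cdot N^{m-2}/N^{2m-1}=N^{-1}$, and the Markov term is $2^{m+1}\sqrt N/t$ times the supremum.

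\textbf{The gap: \eqref{eq:inegalite_accroissements_h_canonique}.} Your plan to reconcile the powers of $N$ by rescaling $t$ cannot work.
\begin{itemize}
\item The first term of the display is what Chebyshev gives for the event $\ens{\abs{U_{m,N+a}\pr{h}-U_{m,a}\pr{h}}>t}$.
\item The second term, $(N+a)^{m-1}N^{3/2-m}/t$ times the supremum, is what Markov gives for the threshold $N^{m-1/2}t$.
\end{itemize}
No single threshold produces both terms. In fact the inequality is false as printed.

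\emph{Counterexample.} Take $m=2$, $a=0$, $(X_i)$ i.i.d.\ standard Gaussian (so $\beta(k)=0$ for $k\geq1$), $h(x,y)=xy$ and $t=N/4$. The right-hand side is at most $16\kappa_2R^2+8\kappa_2/(\pi\sqrt N)$, which is arbitrarily small for small $R$ and large $N$. However, $U_{2,N}(h)/N\to (Z^2-1)/2$, so $\PP\pr{\abs{U_{2,N}\pr{h}}>N/4}\to\PP\pr{\abs{Z^2-1}>1/2}>0$.

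\emph{What is actually provable.} Your argument, and the paper's (see \eqref{eq:proof_tail_bound2}), proves the consistent bound
\[
\PP\pr{\abs{U_{m,N+a}\pr{h}-U_{m,a}\pr{h}}>t}\leq \frac{\kappa_mR^2}{t^2}N\pr{N+a}^{2m-3}\pr{1+\sum_{k=1}^{N+a}k\beta\pr{k}}+\frac{\kappa_m}{t}N\pr{N+a}^{m-1}\sup_{\gri\in\inc^m}\E{\abs{h\pr{X_{\gri}}}\mathbf{1}_{\abs{h\pr{X_{\gri}}}>R}}.
\]
This is the form the paper actually uses in the proof of Theorem~\ref{thm:TLC_Holder}. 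There the threshold is $n^{c-1/2}2^{-j\alpha}\eps$, a multiple of $n^{c-1/2}$ rather than of the block length to the power $c-1/2$. So rescaling by $N^{m-1/2}$ would not serve that application either. You should state and prove this version explicitly rather than leave the power of $N$ open.

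\textbf{A smaller point: the $\mathbb L^1$ part.} Enlarging the supremum to range over mixed vectors changes the statement. What must be shown is
\[
\E{\abs{h^{>R}\pr{X_{i_1},\dots,X_{i_k},X_1^*,\dots,X_{m-k}^*}}}\leq \sup_{\gri\in\inc^m}\E{\abs{h^{>R}\pr{X_{\gri}}}}.
\]
The paper proves this with Berbee's coupling at the indices $i_k+\ell,\dots,i_k+(m-k)\ell$, then lets $\ell\to\infty$.

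Your suggested route via Lemma~\ref{lem:comparaison_esperances} also works, but only after a further truncation, since the lemma requires a bounded $f$:
\begin{enumerate}
\item Apply the lemma to $\min\pr{\abs{h^{>R}},M}$, with the block $\ens{i_1,\dots,i_k}$ followed by the singletons $\ens{i_k+u\ell}$.
\item This costs at most $2(m-k)\beta\pr{\ell}M$.
\item Let $\ell\to\infty$, then $M\to\infty$ by monotone convergence.
\end{enumerate}
Both routes use $\beta\pr{\ell}\to0$. This holds in every application of the proposition.
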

\begin{proof}
Define the kernels
\begin{align*}
  h_{\leq }&=\pr{x_1,\dots,x_m}:=h\pr{x_1,\dots,x_m}
\mathbf{1}_{\abs{h\pr{x_1,\dots,x_m}}\leq R}  \\
h_{>}&=\pr{x_1,\dots,x_m}:=h\pr{x_1,\dots,x_m}
\mathbf{1}_{\abs{h\pr{x_1,\dots,x_m}}> R}.
\end{align*}
Obviously $h=h_{\leq}+h_{>}$, but the kernels $h_{\leq }$ and $h_{>}$ 
are not canonical. Therefore, we define 
\begin{align}\label{eq:noyaux_deg}
    h_{\leq}^{\operatorname{can}}\pr{x_1,\dots,x_m}
&=\pr{\delta_{x_1}-\PP_{X_1}}\dots \pr{\delta_{x_m}-\PP_{X_1}}\pr{h_{\leq}}\\
   h_{>}^{\operatorname{can}}\pr{x_1,\dots,x_m}
&=\pr{\delta_{x_1}-\PP_{X_1}}\dots \pr{\delta_{x_m}-\PP_{X_1}}\pr{h_{>}}.
\end{align}
Since $h=h_{\leq}^{\operatorname{can}}+h_{>}^{\operatorname{can}}$, 
we derive that 
\begin{multline*}
    \PP\pr{\abs{U_{m,N+a}\pr{h}-U_{m,a}\pr{h}}> t}\\ \leq 
\PP\pr{\abs{U_{m,N+a}\pr{h_{\leq}^{\operatorname{can}}}-
U_{m,a}\pr{h_{\leq}^{\operatorname{can}}}}> 
\frac t2}+\PP\pr{\abs{U_{m,N+a}\pr{h_{>}^{\operatorname{can}}}-U_{m,a}\pr{h_{>}^{\operatorname{can}}}}> 
\frac t2}.
\end{multline*}
Using Tchebychev's inequality for the first term, Markov's and the triangle inequality for the second one, we derive that 
\begin{multline}\label{eq:proof_tail_bound}
\PP\pr{\abs{U_{m,N+a}\pr{h}-U_{m,a}\pr{h}}>N^{m-1/2}t}\leq
    \frac{4}{t^2 }\E{\pr{
    U_{m,N+a}\pr{h_{\leq}^{\operatorname{can}}}-
        U_{m,a}\pr{h_{\leq}^{\operatorname{can}}}}^2}\\
    +\frac{2}{t }\sum_{\gri\in\inc^m_{N+a}, i_m\geq a}
    \E{\abs{h_{>}^{\operatorname{can}}\pr{X_{\gri}}  }}.
\end{multline}
Notice that $h_{\leq}^{\operatorname{can}}$ is bounded by $2^mR$ and $\E{\abs{h_{>}^{\operatorname{can}}\pr{X_{\gri}} } }$ can be bounded, by symmetry,
by terms of the form 
$\E{\abs{h_{>} \pr{X_{i_1},\dots,X_{i_k},Y_1,\dots,Y_{m-k}}  }}$, 
where $Y_1,\dots,Y_{m-k} $ are independent random variables. 
For such terms, the following inequality holds 
\begin{equation}\label{eq:bound_expectation}
  \E{\abs{h_{>} \pr{X_{i_1},\dots,X_{i_k},Y_1,\dots,Y_{m-k}  }}}\leq   \sup_{\gri\in\inc^m}\E{\abs{h_{>}\pr{X_{\gri}}}}.
\end{equation} 
 Indeed,  using recursively Berbee's coupling lemma (see \cite{MR547109}), we can find independent random variables $X_{k+1}^*,\dots,X_m^*$ having the same distribution as 
$X_1$ and such that $\PP\pr{X_{k+u}^*\neq X_{i_{k}+u\ell}}\leq \beta\pr{\ell}$ for each $u\in\intent{1,m-k}$. Let 
$E_\ell$ be the event $\bigcap_{u=1}^{m-k}\ens{X_{k+u}^*= X_{i_{k}+u\ell}} $. For each $\ell$, 
\begin{align*}
\sup_{\gri\in\inc^m}\E{\abs{h_{>}\pr{X_{\gri}}}}&\geq 
\E{\abs{h_{>}^{\operatorname{can}}\pr{X_{i_1},\dots,X_{i_k}, 
X_{i_k+\ell},\dots,X_{i_k +\pr{m-k}\ell} } }  }\\
&=\E{\abs{h_{>}^{\operatorname{can}}\pr{X_{i_1},\dots,X_{i_k}, 
X_{i_k+\ell},\dots,X_{i_k +\pr{m-k}\ell} } }\mathbf{1}_{E_\ell}  }\\
&+\E{\abs{h_{>}^{\operatorname{can}}\pr{X_{i_1},\dots,X_{i_k}, 
X_{i_k+\ell},\dots,X_{i_k +\pr{m-k}\ell} } }\mathbf{1}_{E_\ell^c}}\\
 &=\E{\abs{h_{>}^{\operatorname{can}}\pr{X_{i_1},\dots,X_{i_k}, 
X_{k+1}^*,\dots,X_{m}^* } }\mathbf{1}_{E_\ell}  } \\
&+\E{\abs{h_{>}^{\operatorname{can}}\pr{X_{i_1},\dots,X_{i_k}, 
X_{i_k+\ell},\dots,X_{i_k +\pr{m-k}\ell} } }\mathbf{1}_{E_\ell^c}}\\
 &\geq \E{\abs{h_{>}^{\operatorname{can}}\pr{X_{i_1},\dots,X_{i_k}, 
X_{k+1}^*,\dots,X_{m}^* } }   }\\
&-
\E{\abs{h_{>}^{\operatorname{can}}\pr{X_{i_1},\dots,X_{i_k}, 
X_{k+1}^*,\dots,X_{m}^* } }\mathbf{1}_{E_\ell^c}  }
 \end{align*}
We thus derived that 
\begin{multline}
    \E{\abs{h_{>}^{\operatorname{can}}\pr{X_{i_1},\dots,X_{i_k}, 
X_{k+1}^*,\dots,X_{m}^* } }   }
\\
\leq \E{\abs{h_{>}^{\operatorname{can}}\pr{X_{i_1},\dots,X_{i_k}, 
X_{k+1}^*,\dots,X_{m}^* } }\mathbf{1}_{E_\ell^c}  }+
\sup_{\gri\in\inc^m}\E{\abs{h_{>}\pr{X_{\gri}}}}
\end{multline}
and we get \eqref{eq:bound_expectation} by letting $\ell$ going to 
infinity.
 
The first term of the right hand side of \eqref{eq:proof_tail_bound} 
is estimated by Proposition~\ref{prop:covariance_inequality} and for 
the second term, we use the triangle inequality and 
\eqref{eq:bound_expectation}
to derive that 
\begin{multline}\label{eq:proof_tail_bound2}
    \PP\pr{\abs{U_{m,N+a}\pr{h}-U_{m,a}\pr{h}}> t}\leq 
    \frac{2^{2m+2}C_m}{t^2 }R^2 \pr{N+a}^{m-1}N\pr{1+\sum_{k=1}^{N+a}k^{m-1}\beta\pr{k}}\\ 
   + \frac{2^{m+1}}{t }\pr{N+a}^{m-1}N 
\sup_{\gri\in\inc^m}\E{\abs{h\pr{X_{\gri}}} \mathbf{1}_{\abs{h\pr{X_{\gri}}}>R}} .
\end{multline}
Bounding $k^{m-1}$ by $\pr{N+a}^{m-2}k$ and letting $\kappa_m=2^{2m+2}C_m+2^{m+1}$
end  the proof of \eqref{eq:inegalite_accroissements_h_canonique}.

To complete the proof of Proposition~\ref{prop:tail_inequality}, it remains to check \eqref{eq:inegalite_max_h_canonique}. This follows the same lines: we decompose $h$ as $h_{\leq}^{\operatorname{can}}+h_{>}^{\operatorname{can}}$
and use Tchebychev's and Markov's inequality combined with the elementary bound $\max_{m\leq n\leq N}\abs{\sum_{\gri\in\inc^m_n}A_{\gri}}\leq \sum_{\gri\in\inc^m_N}\abs{A_{\gri}}$ to get 
\begin{multline*}
    \PP\pr{\max_{m\leq n\leq N}\abs{U_{m,n}\pr{h} }>N^{m-1/2}t}\\ \leq
\PP\pr{\max_{m\leq n\leq N}\abs{U_{m,n}\pr{h_{\leq}^{\operatorname{can}}} }>N^{m-1/2}
\frac t2}+\PP\pr{\abs{\max_{m\leq n\leq N}\abs{U_{m,n}\pr{h_{>}^{\operatorname{can}}} }}>N^{m-1/2}
\frac t2}\\
\leq \frac{4}{t^2N^{2m-1}}\E{ \max_{m\leq n\leq N}\pr{U_{m,n}\pr{h_{\leq}}}^2 }
+\frac{2}{tN^{m-1/2}}\sum_{\gri\in\inc^m_N}\E{\abs{h_{>}^{\operatorname{can}}\pr{X_{\gri}}  }}.
\end{multline*}
Then applying \eqref{eq:inegalite_max_h_canonique_h_bornee} for the first term and treating the second one as before gives  
\begin{multline*}
    \PP\pr{\max_{m\leq n\leq N}\abs{U_{m,n}\pr{h} }>N^{m-1/2}t}\\  
\leq \frac{4}{t^2N^{2m-1}}\pr{C_mN^m \pr{\log_2 N}^2\pr{1+\sum_{k=1}^Nk^{m-1}\beta\pr{k} } }
+\frac{2}{tN^{m-1/2}}2^{m+1} N^m\sum_{\gri\in\inc^m_N}\E{\abs{h_{>}^{\operatorname{can}}\pr{X_{\gri}}  }}.
\end{multline*}
Bounding $k^{m-1}$ by $N^{m-2}k$ end the proof of Proposition~\ref{prop:tail_inequality}.
\end{proof}
\subsection{Treatment of biais terms}

Since in general, the random variables  $h_{c,m}\pr{X_{\gri}}$, $\gri\in\inc^c$, involved in Hoeffding's decomposition are not centered, 
their expectation in the centered $U$-statistics does not vanished. Nevertheless, under appropriate conditions on integrability of the random variables $h\pr{X_{\gri}},\gri\in\inc^m$ and on the $\beta$-mixing coefficients, the contribution of the expectations of the random variables $h_{c,m}\pr{X_{\gri}}$, $\gri\in\inc^c$, is negligible. 
\begin{Proposition}\label{prop:neg_terme_biais}
Let $h\colon S^m\to\R$ be a canonical kernel. Suppose that $\sum_{k=1}^{+\infty} k\beta\pr{k}$ is finite. 
Suppose that $\ens{h\pr{X_{\gri}}^2,\gri\in\inc^m}$ is uniformly integrable.
% mettre la bonne condition 
Then 
\begin{equation}\label{eq:convergence_esperances_TLC}
    \lim_{n\to +\infty}\frac 1{n^{m-1/2  }}\sum_{\gri\in\inc^m_n}
    \abs{\E{h\pr{X_{\gri}}}}=0.
\end{equation}
Let $\alpha\in\pr{0,1/2}$. Suppose that 
\begin{equation*}
    \sup_{t>0}\sup_{\gri\in\inc^m}t^{p\pr{\alpha}}
    \PP\pr{\abs{h\pr{X_{\gri}}}>t }<+\infty,
\end{equation*}
where $p\pr{\alpha}=\pr{1/2-\alpha}^{-1}$. Then
\begin{equation}\label{eq:convergence_esperances_TLCF_Holder}
    \lim_{n\to +\infty}\frac 1{n^{m-1/2-\alpha }}\sum_{\gri\in\inc^m_n}
    \abs{\E{h\pr{X_{\gri}}}}=0.
\end{equation}
\end{Proposition}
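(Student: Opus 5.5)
The plan is to bound each $\abs{\E{h\pr{X_{\gri}}}}$ by comparing it with the expectation under independent blocks, where canonicity forces the expectation to vanish. The comparison uses a truncation at a level $R$ (possibly depending on $n$) together with Lemma~\ref{lem:comparaison_esperances}.

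Fix $\gri=\pr{i_1,\dots,i_m}\in\inc^m_n$ and let $k=k\pr{\gri}$ be the largest gap $i_{u+1}-i_u$, attained at some $u_0$. Around that gap, isolate a singleton block so that the independent-block expectation vanishes by canonicity.
\begin{itemize}
\item If $u_0=1$, use the blocks $\ens{i_1}$ and $\ens{i_2,\dots,i_m}$.
\item If $u_0=m-1$, use $\ens{i_1,\dots,i_{m-1}}$ and $\ens{i_m}$.
\item Otherwise, use a three-block splitting around $i_{u_0}$ or $i_{u_0+1}$ with both adjacent gaps controlled.
\end{itemize}
The cleanest choice is the one for which both neighbours of the singleton are separated by a gap at least $j\pr{\gri}$, as in the proof of Proposition~\ref{prop:covariance_inequality}.

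Next, write $h=h_{\leq}+h_{>}$ with $h_{\leq}=h\mathbf{1}_{\abs h\leq R}$. For the independent vector $\xi$ one has $\E{h\pr{\xi}}=0$, since integrating the singleton coordinate against $\PP_{X_0}$ kills $h$. Hence
\[
\abs{\E{h\pr{X_{\gri}}}}\leq \abs{\E{h_{\leq}\pr{X_{\gri}}}-\E{h_{\leq}\pr{\xi}}}+\E{\abs{h_{>}\pr{X_{\gri}}}}+\E{\abs{h_{>}\pr{\xi}}}.
\]
Lemma~\ref{lem:comparaison_esperances} bounds the first term by $4R\beta\pr{j\pr{\gri}}$. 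The last term is bounded by $\sup_{\gri'}\E{\abs{h_{>}\pr{X_{\gri'}}}}$ via the Berbee coupling argument giving \eqref{eq:bound_expectation}, since the block vector $\xi$ is the limit in law of $X_{\gri'}$ with blocks pushed apart. Set $\eta\pr{R}:=\sup_{\gri}\E{\abs{h\pr{X_{\gri}}}\mathbf{1}_{\abs{h\pr{X_{\gri}}}>R}}$.

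Now sum over $\gri$. Counting as in Proposition~\ref{prop:covariance_inequality}, the number of $\gri\in\inc^m_n$ with $j\pr{\gri}=k$ is at most $C_m n\,k^{m-1}$, because the remaining gaps are at most $k$. Alternatively, crude counting gives at most $C n^{m-1}$ choices, with one gap fixed equal to $k$ and the others free. Using the crude bound,
\[
\sum_{\gri\in\inc^m_n}\abs{\E{h\pr{X_{\gri}}}}\leq C_m R\, n^{m-1}\sum_{k=1}^{n}\beta\pr{k}+C_m n^m\eta\pr{R}.
\]
I expect the main obstacle to be getting the gap counting tight enough that the $\beta$-part is $o\pr{n^{m-1/2}}$ with only $\sum k\beta\pr{k}<\infty$. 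With the crude count and $R=\sqrt n\,\eps$, the first term is $O\pr{\eps n^{m-1/2}}$.

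For the second term, uniform integrability of $h^2$ gives $\eta\pr{R}\leq R^{-1}\sup_{\gri}\E{h\pr{X_{\gri}}^2\mathbf{1}_{\abs{h\pr{X_{\gri}}}>R}}=o\pr{R^{-1}}$. Then $n^m\eta\pr{\eps\sqrt n}=o\pr{n^{m-1/2}}/\eps$. Dividing by $n^{m-1/2}$ and letting $n\to\infty$ and then $\eps\to0$ gives \eqref{eq:convergence_esperances_TLC}. Choosing $\eps=\eps_n\to0$ slowly makes this rigorous.

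For \eqref{eq:convergence_esperances_TLCF_Holder}, the weak-$L^{p}$ bound gives $\eta\pr{R}\leq C R^{1-p\pr{\alpha}}$. Take $R=n^{1/2-\alpha}\eps$, so that $R^{p\pr{\alpha}}=\eps^{p}n$ and the first term is $O\pr{\eps n^{m-1/2-\alpha}}$. The second term is then $n^m R^{1-p}=\eps^{1-p}n^{m}n^{\pr{1/2-\alpha}\pr{1-p}}=\eps^{1-p}n^{m-1/2-\alpha}\cdot n^{-\pr{1/2-\alpha}p+1}$. Since $\pr{1/2-\alpha}p=1$, this is of exact order $n^{m-1/2-\alpha}$ and not $o$ of it. To gain the missing factor, I would use the finer count $n k^{m-1}$ on the $\beta$-terms: the sum becomes $R\,n\sum_k k^{m-1}\beta\pr{k}$, and for $m=2$ this is $Rn$, leaving ample room to take $R$ larger, for instance $R=n^{m-1/2-\alpha}/\pr{n\,\omega_n}$. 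If $\sum_k k^{m-1}\beta\pr{k}$ diverges for $m\geq 3$, I would interpolate, bounding $k^{m-2}\leq n^{m-2}$ as the paper does. The choice of $R$ that balances the two terms is the delicate step; I expect it to work, possibly with the $\sup_t$ assumption replaced by the $\lim$ form \eqref{eq:tail_condition_Holder} to gain the $o\pr{1}$.
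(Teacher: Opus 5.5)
Your proof of \eqref{eq:convergence_esperances_TLC} is sound and is essentially the paper's argument: truncate, decouple around a singleton block, use canonicity, take $R\asymp\eps\sqrt n$, and conclude by uniform integrability. Decoupling $h_{\leq}$ directly and using canonicity of $h$ on $\xi$, instead of re-canonicalising the truncated kernels, is a harmless variant.

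The H\"older part, however, is not proved. Your diagnosis is correct. With a single truncation level for all $\gri$, the two terms balance at $R\asymp n^{1/2-\alpha}$ and give only $O\pr{n^{m-1/2-\alpha}}$. This is also all the paper's ``exactly the same lines'' sketch gives under the $\sup_t$ hypothesis: its choice $R\asymp\eps n^{1/2-\alpha}$ leaves $\limsup\lesssim\eps+\eps^{1-p\pr{\alpha}}$. But your remedies do not close the gap.
\begin{itemize}
\item The count $C_m n k^{m-1}$ holds for the largest gap, not for the quantity $j\pr{\gri}$ that the singleton bound involves. For $m=4$ with gaps $g_1,g_2,g_3$, one has $j\pr{\gri}=\max\pr{g_1,g_3}$ with $g_2$ free, so there are of order $n^2k$ such $\gri$.
\item For $m=2$ nothing is gained at all. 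At most $n$ pairs have gap $k$, which is exactly the crude count, so the $\beta$-part is still of order $Rn$. Making it $o\pr{n^{3/2-\alpha}}$ forces $R=o\pr{n^{1/2-\alpha}}$, and then the only available bound $n^2\eta\pr{R}\lesssim n^2R^{1-p\pr{\alpha}}$ is of larger order than $n^{3/2-\alpha}$.
\item Replacing the $\sup_t$ hypothesis by the $\lim$ form changes the statement. That form is what Theorem~\ref{thm:TLC_Holder} actually supplies through Proposition~\ref{prop:tail_Holder}, and under it the global-$R$ argument does close.
\end{itemize}

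The missing idea is to let the truncation level depend on $\gri$. Let $j\pr{\gri}=k$, write $p=p\pr{\alpha}$, and set $M:=\sup_{t>0}\sup_{\gri}t^{p}\PP\pr{\abs{h\pr{X_{\gri}}}>t}$. Your estimate $\abs{\E{h\pr{X_{\gri}}}}\leq 4R\beta\pr{k}+2\eta\pr{R}$, combined with $\eta\pr{R}\leq \frac{p}{p-1}MR^{1-p}$, can be optimised at $R=\beta\pr{k}^{-1/p}$ (let $R\to\infty$ if $\beta\pr{k}=0$). This gives the Davydov-type bound $\abs{\E{h\pr{X_{\gri}}}}\leq C\beta\pr{k}^{1-1/p}=C\beta\pr{k}^{1/2+\alpha}$.

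Since $\beta$ is nonincreasing and $\sum_k k\beta\pr{k}<\infty$, one has $k^2\beta\pr{k}\to0$. Hence $\beta\pr{k}^{1/2+\alpha}=O\pr{k^{-1-2\alpha}}$ is summable, which is where $\alpha>0$ enters. Combined with the crude count $\#\ens{\gri\in\inc^m_n:j\pr{\gri}=k}\leq\pr{m-1}n^{m-1}$, this yields $\sum_{\gri\in\inc^m_n}\abs{\E{h\pr{X_{\gri}}}}=O\pr{n^{m-1}}=o\pr{n^{m-1/2-\alpha}}$. That is \eqref{eq:convergence_esperances_TLCF_Holder} under the hypothesis as stated.
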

\begin{proof}[Proof of Proposition~\ref{prop:neg_terme_biais}]
Let $\ell\in\intent{1,m}$ and $\gri\in\inc^m_n$ be fixed. Let also $h_{\leq}^{\operatorname{can}}$ and $h_{>}^{\operatorname{can}}$ be defined by \eqref{eq:noyaux_deg}
We apply Lemma~\ref{lem:comparaison_esperances} with the blocks $\ens{i_1,\dots,i_{\ell-1}}$, $\ens{i_\ell}$ and $\ens{i_{\ell+1},\dots,i_m}$ and the function $h_{\leq}^{\operatorname{can}}$. We get that 
\[
\frac 1{n^{m-1/2  }}\sum_{\gri\in\inc^m_n}
    \abs{\E{h\pr{X_{\gri}}}}\leq n^{-1/2}\sum_{k=1}^nk\beta\pr{k}R +n^{1/2}\sup_{\gri\in\inc^m_n}
    \E{\abs{h\pr{X_{\gri}}}\mathbf{1}_{\abs{h\pr{X_{\gri}}}>R} }.
\]
For a fixed $\eps$, we choose $R=\eps n^{1/2}/\sum_{k=1}^{+\infty}k\beta\pr{k} $ and we use 
the elementary inequality $\E{Y\mathbf{1}_{Y>R}}\leq R^{-1}
\E{Y^2\mathbf{1}_{Y>R}}$
so that the previous bound becomes 
\[
\frac 1{n^{m-1/2  }}\sum_{\gri\in\inc^m_n}
    \abs{\E{h\pr{X_{\gri}}}}\leq \eps +\frac1{\eps}\sum_{k=1}^{+\infty}k\beta\pr{k}
     \sup_{\gri\in\inc^m_n}
    \E{h\pr{X_{\gri}}^2\mathbf{1}_{\abs{h\pr{X_{\gri}}}>
    \eps n^{1/2 }/\sum_{k=1}^{+\infty}k\beta\pr{k}} }.
\]
    Uniform integrability of $\ens{h\pr{X_{\gri}}^2,\gri\in\inc^m}$ gives that for each positive $\eps$, 
    \[
    \limsup_{n\to +\infty}\frac 1{n^{m-1/2  }}\sum_{\gri\in\inc^m_n}
    \abs{\E{h\pr{X_{\gri}}}}\leq \eps
    \]
    from which \eqref{eq:convergence_esperances_TLC}. 
    The proof of \eqref{eq:convergence_esperances_TLCF_Holder} follows exactly the same lines, replacing the normalization $n^{m-1/2}$ by $n^{m-1/2-\alpha}$ and the bound  
    $\E{Y\mathbf{1}_{Y>R}}\leq R^{-1}
\E{Y^2\mathbf{1}_{Y>R}}$ by 
$\E{Y\mathbf{1}_{Y>R}}\leq R^{1-p\pr{\alpha}}
 \sup_{t>0} t^{p\pr{\alpha}}
    \PP\pr{Y>t }$. This ends the proof of Proposition~\ref{prop:neg_terme_biais}.
\end{proof}
\subsection{Integrability results}
The main results assume uniform integrability of the summands of the $U$-statistics. This property is also required for the $U$-statistics arising in Hoeffding's decomposition. The next proposition shows that no supplementary assumption is required. In this section, $\pr{X_i}_{i\in \Z}$ is a strictly stationary sequence and $h\colon S^m\to \R$ is a symmetric and measurable function . 

\begin{Proposition}\label{prop:UI_carre}
Let $h\colon S^m\to \R$ be a measurable function and let $\pr{X_i}_{i\in \Z}$ be a strictly stationary sequence.
Suppose that for each $k\in\intent{1,m}$ the family $\ens{h\pr{X_{\gri},X_1^*,\dots,X_{m-k}^*}^2,\gri\in \inc^k} $
 is uniformly integrable.
  Let $\pi_{k,m}\pr{h}$ be defined by 
\eqref{eq:definition_de_pi_km}. Then for each $k\in\intent{1,m}$, 
the family $\pr{\pr{\pr{\pi_{k,m}\pr{h}}\pr{X_{\gri}}}^2,\gri\in \inc^m}$ is uniformly integrable.
\end{Proposition}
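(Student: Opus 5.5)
Expanding the product of signed measures in \eqref{eq:definition_de_pi_km} writes $\pi_{k,m}\pr{h}$ as a finite signed combination, with coefficients $\pm 1$, of functions of the form
\[
\pr{x_1,\dots,x_k}\mapsto \PP_{X_0}^{\otimes\pr{m-\abs{J}}}h\pr{\pr{x_j}_{j\in J},\cdot}, \quad J\subset\intent{1,k}.
\]
By symmetry of $h$, each such function is $g_J\pr{\pr{x_j}_{j\in J}}$ with $g_J(y)=\E{h\pr{y,X_1^*,\dots,X_{m-\abs J}^*}}$. A finite sum of families with uniformly integrable squares again has uniformly integrable squares, since $\pr{\sum_{J} a_J}^2\leq 2^k\sum_J a_J^2$ and a finite sum of uniformly integrable families is uniformly integrable. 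So it suffices to show, for each $J$ with $\abs J=j$ and each $\gri$, that $\ens{g_J\pr{X_{\gri_J}}^2}$ is uniformly integrable, where $\gri_J\in\inc^j$ is the subvector of $\gri$ indexed by $J$. For $J=\emptyset$ the term is a constant, which is finite by the hypothesis.

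The key step is that conditional expectation preserves uniform integrability of squares. Since $\pr{X_1^*,\dots,X_{m-j}^*}$ is independent of $\pr{X_i}$, we have
\[
g_J\pr{X_{\gri_J}}=\E{h\pr{X_{\gri_J},X_1^*,\dots,X_{m-j}^*}\mid \Fca},
\]
with $\Fca=\sigma\pr{X_i,i\in\Z}$. Conditional Jensen then gives $g_J\pr{X_{\gri_J}}^2\leq \E{Z_{\gri_J}^2\mid\Fca}$, where $Z_{\gri_J}:=h\pr{X_{\gri_J},X^*}$. The family $\ens{Z_{\gri}^2,\gri\in\inc^j}$ is uniformly integrable by hypothesis (for $j=m$ it is $h\pr{X_{\gri}}^2$ itself), so it remains to invoke the classical fact that if $\ens{Y_\lambda}$ is uniformly integrable and nonnegative, then so is $\ens{\E{Y_\lambda\mid\Fca}}$, together with domination: $0\leq V_\lambda\leq W_\lambda$ with $\ens{W_\lambda}$ uniformly integrable implies $\ens{V_\lambda}$ is uniformly integrable.

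To prove the classical fact, I will use the de la Vallée-Poussin criterion. There is a convex increasing $\Phi$ with $\Phi(x)/x\to\infty$ and $\sup_\lambda\E{\Phi\pr{Y_\lambda}}<\infty$, and conditional Jensen gives $\E{\Phi\pr{\E{Y_\lambda\mid\Fca}}}\leq \E{\Phi\pr{Y_\lambda}}$. Note that $\Phi$ depends only on the family $\ens{Z_{\gri}^2}$, which is uniform in $\gri$, so the bound is uniform over the index set.

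The main obstacle is bookkeeping rather than analysis: one must check that the symmetric expansion really reduces every term to some $h\pr{X_{i_{j_1}},\dots,X_{i_{j_r}},X_1^*,\dots}$ with increasing indices $i_{j_1}<\dots<i_{j_r}$, so that the hypothesis indexed by $\inc^r$ applies. The subvector of an increasing index vector is increasing, so this holds.
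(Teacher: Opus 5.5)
Your proof is correct and takes essentially the same route as the paper. Like the paper, you expand $\pi_{k,m}(h)(X_{\gri})$ into a finite signed sum of conditional expectations of variables $H_{\gri',j}$ (with $\gri'$ a subvector of $\gri$), bound squares by conditional Jensen, and use that conditional expectations of a uniformly integrable family remain uniformly integrable (de la Vall\'ee Poussin). Your bookkeeping is more precise than the paper's loose phrasing, which writes the terms as conditional expectations of $H_{\gri,k}$ itself; here you condition on $\sigma(X_i, i\in\Z)$ and handle the constant term via the case $k=1$.
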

\begin{Proposition}\label{prop:unif_bound_H2_log}
For $a\geq 0$, let $\varphi_a\colon \R \to\R_+$ defined by $\varphi_a\pr{x}\mapsto x^2\pr{\log \pr{1+\abs{x}}}^a$. Let $h\colon S^m\to \R$ be a measurable function and let $\pr{X_i}_{i\in \Z}$ be a strictly stationary sequence.
Suppose that for each $k\in\intent{1,m}$ and each 
$\gri:=\pr{i_1,\dots,i_k}\in \inc^m$, 
\[
\E{\varphi_a\pr{h\pr{X_{i_1},\dots,X_{i_k},X_1^*,\dots,X_{m-k}^*}}}<+\infty,
\]
where the vector $\pr{X_1^*,\dots,X_{m-k}^*}$ is  independent of $\pr{X_i}_{i\in \Z}$ and 
consists of independent random variables having the
same distribution as  (when $m=k$, the previous condition reads simply as $\E{\varphi_a\pr{h\pr{X_{i_1},\dots,X_{i_m}}}}<+\infty$). Suppose moreover that  $\sup_{\gri\in I^m_\infty }\E{\varphi_a\pr{h\pr{X_{\gri}}} }<+\infty$. Let $\pi_{k,m}\pr{h}$ be defined by 
\eqref{eq:definition_de_pi_km}. Then for each $k\in\intent{1,m}$, 
\begin{equation*}
 \sup_{\gri\in \inc^k}  \E{\varphi_a\pr{\pr{\pi_{k,m}\pr{h}}\pr{X_{\gri}}} } <+\infty
\end{equation*}
\end{Proposition}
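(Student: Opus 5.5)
We prove Proposition~\ref{prop:unif_bound_H2_log} by expanding the product of signed measures in \eqref{eq:definition_de_pi_km}. For $\gri=\pr{i_1,\dots,i_k}$ this gives
\[
\pr{\pi_{k,m}\pr{h}}\pr{X_{\gri}}=\sum_{J\subset\intent{1,k}}\pr{-1}^{k-\abs{J}}\,\PP_{X_0}^{\otimes\pr{m-\abs{J}}}h\pr{X_{i_j},j\in J,\cdot},
\]
where the term indexed by $J$ integrates $h$ over $m-\abs{J}$ coordinates with respect to $\PP_{X_0}$ and keeps the variables $X_{i_j}$, $j\in J$. Since $\varphi_a$ is convex, even and nondecreasing on $\R_+$, we have $\varphi_a\pr{\sum_{J}y_J}\leq 2^{-k}\sum_J\varphi_a\pr{2^ky_J}$. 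Moreover, $\varphi_a\pr{2^k y}\leq C_{k,a}\varphi_a\pr{y}$, because $\log\pr{1+2^k\abs{y}}\leq k\log 2+\log\pr{1+\abs{y}}$ and $\varphi_a\pr{y}$ dominates $y^2$ up to a constant for $\abs{y}\geq 1$, while for $\abs{y}\leq1$ everything is bounded. Hence it suffices to bound, uniformly in $\gri$ and for each $J$, the quantity $\E{\varphi_a\pr{\PP_{X_0}^{\otimes\pr{m-\abs{J}}}h\pr{X_{i_j},j\in J,\cdot}}}$.

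Each such term is a conditional expectation. Writing $J=\ens{j_1<\dots<j_\ell}$, we have
\[
\PP_{X_0}^{\otimes\pr{m-\ell}}h\pr{X_{i_{j_1}},\dots,X_{i_{j_\ell}},\cdot}=\E{h\pr{X_{i_{j_1}},\dots,X_{i_{j_\ell}},X_1^*,\dots,X_{m-\ell}^*}\mid X_{i_{j_1}},\dots,X_{i_{j_\ell}}},
\]
using the independence of $\pr{X^*_u}$ from $\pr{X_i}$ and Fubini. By the conditional Jensen inequality for the convex function $\varphi_a$, the expectation of $\varphi_a$ of this term is at most $\E{\varphi_a\pr{h\pr{X_{i_{j_1}},\dots,X_{i_{j_\ell}},X_1^*,\dots,X_{m-\ell}^*}}}$. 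Two degenerate cases need separate handling. For $\ell=0$ the term is the constant $\E{h\pr{X_1^*,\dots,X_m^*}}$, which is finite by the hypothesis with $k=1$ and Jensen again. For $\ell=m$ the term is $h\pr{X_{\gri}}$, whose bound is the assumed supremum. Note that $\pr{i_{j_1},\dots,i_{j_\ell}}\in\inc^\ell$, and by symmetry of $h$ the ordering of the arguments is irrelevant.

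The main obstacle is uniformity in $\gri$. The hypothesis is stated index-wise, and only the case $k=m$ comes with an explicit supremum. To get a uniform bound for $\ell<m$, I would reproduce the coupling argument of \eqref{eq:bound_expectation}. Apply Berbee's lemma recursively to replace $X_1^*,\dots,X_{m-\ell}^*$ by $X_{i_{j_\ell}+u L}$, $u\in\intent{1,m-\ell}$, off an event $E_L^c$ of probability at most $\pr{m-\ell}\beta\pr{L}$. This gives
\[
\E{\varphi_a\pr{H}}\leq \sup_{\gr{i}'\in\inc^m}\E{\varphi_a\pr{h\pr{X_{\gr{i}'}}}}+\E{\varphi_a\pr{H}\mathbf 1_{E_L^c}}.
\]
Since $\varphi_a\pr{H}$ is integrable for the fixed index, the last term tends to $0$ as $L\to\infty$ when $\beta\pr{L}\to0$, which holds under \eqref{eq:condition_sur_beta}. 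The bound thus reduces to $\sup_{\inc^m}\E{\varphi_a\pr{h\pr{X_{\gri}}}}$, uniformly in $\gri$ and $J$, and summing over the $2^k$ subsets $J$ concludes the proof.

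Proposition~\ref{prop:UI_carre} follows the same scheme. Uniform integrability of squares is preserved under conditional expectation via the de la Vallée-Poussin criterion with a convex function $\psi$ with $\psi\pr{x}/x^2\to\infty$ and $\psi\pr{2x}\leq C\psi\pr{x}$. It is also preserved under finite sums, using $\pr{\sum y_J}^2\leq 2^k\sum y_J^2$ and the fact that a finite union of uniformly integrable families is uniformly integrable.
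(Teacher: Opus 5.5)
Your core argument is the paper's own proof, written out in more detail. You expand $\pr{\pi_{k,m}\pr{h}}\pr{X_{\gri}}$ into $2^k$ partially integrated terms and absorb the finite sum using convexity and the doubling property of $\varphi_a$. You then apply conditional Jensen to each term, written as a conditional expectation of $h\pr{X_{i_{j_1}},\dots,X_{i_{j_\ell}},X_1^*,\dots,X_{m-\ell}^*}$. The paper only states that the variable is a finite sum of such conditional expectations and that convexity and Jensen conclude.

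You part ways on uniformity in $\gri$, and you are right that this is the delicate point. The paper bounds each term by $\sup_{\gri\in\inc^\ell}\E{\varphi_a\pr{H_{\gri,\ell}}}$. That is, it tacitly reads the hypothesis as uniform over $\inc^\ell$ for every $\ell$, which is how \eqref{eq:TLCF_UI} supplies it in Theorem~\ref{thm:TLCF_C}. Under that reading your Jensen step already finishes, with no assumption on the dependence.

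Your Berbee coupling, modelled on \eqref{eq:bound_expectation}, instead extracts uniformity from index-wise finiteness plus the $\inc^m$ supremum. The step is sound, but it needs $\beta\pr{L}\to 0$, and the proposition assumes only strict stationarity. Invoking \eqref{eq:condition_sur_beta} imports a hypothesis from the theorems, so what you actually prove is a variant with absolute regularity added. Your version asks less of $h$ for $\ell<m$; the paper's version asks nothing of the dependence. In Theorem~\ref{thm:TLCF_C} both apply, so the coupling is redundant there.

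Some such addition is unavoidable, because the statement read literally is false. Take $m=3$, $S=[0,1)^2$, and $X_i=\pr{\theta,\omega+i\theta \bmod 1}$ with $\theta,\omega$ independent and uniform. This sequence is strictly stationary but not mixing. Define $h$ as follows:
\begin{itemize}
\item $h\pr{x,y,z}=c_d$ if exactly two of the three first coordinates coincide, say equal to an irrational $t$, and the second coordinates of that pair differ by $\pm dt$ modulo $1$ for some integer $d\geq 1$ (such $d$ is necessarily unique);
\item $h\pr{x,y,z}=0$ otherwise.
\end{itemize}
Then, almost surely, $h\pr{X_{\gri}}=0$ for all $\gri\in\inc^3$, $h\pr{X_i,X_1^*,X_2^*}=0$, and $h\pr{X_i,X_j,X_1^*}=c_{j-i}$. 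So every hypothesis holds. Yet $\pr{\pi_{2,3}\pr{h}}\pr{X_i,X_j}=c_{j-i}$ almost surely, whose $\varphi_a$-moments are unbounded over $\inc^2$ as soon as $c_d\to\infty$.

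Your write-up should therefore state the extra assumption explicitly in the proposition. Either use the uniform reading the paper relies on, or assume $\beta\pr{L}\to 0$; do not leave it as an aside inside the proof.
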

 
\begin{Proposition}\label{prop:tail_Holder}
Let $h\colon S^m\to \R$ be a measurable function and let $\pr{X_i}_{i\in \Z}$ be a strictly stationary sequence. Suppose that 
\begin{equation*}
    \lim_{t\to +\infty}\max_{1\leq k\leq m}
    \sup_{\gri\in\inc^k} t^p\PP\pr{\abs{h\pr{X_{i_1},\dots,X_{i_k},X_1^*,\dots,X_{m-k}^*}}>t  }=0.
\end{equation*}
Then for each $k\in\intent{1,m}$, 
\begin{equation*}
   \lim_{t\to +\infty} t^p \sup_{\gri\in I^k_\infty}  \PP\pr{\abs{\pi_{k,m}\pr{h}\pr{X_{\gri}}}>t}=0. 
\end{equation*}
\end{Proposition}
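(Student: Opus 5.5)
We expand the product defining $\pi_{k,m}(h)$ to reduce the claim to a tail bound for a finite number of conditional expectations of the $H_{\gri,j}$. By \eqref{eq:definition_de_pi_km} and symmetry of $h$,
\[
\pr{\pi_{k,m}\pr{h}}\pr{x_1,\dots,x_k}=\sum_{J\subset\intent{1,k}}(-1)^{k-\abs{J}}\,\PP_{X_0}^{\otimes(m-\abs{J})}h\pr{x_J,\cdot}.
\]
Evaluated at $X_{\gri}$, each summand equals
\[
g_J\pr{X_{\gri_J}}:=\E{H_{\gri_J,\abs{J}}\mid X_{\gri_J}},
\]
where $\gri_J$ is the subvector of $\gri$ indexed by $J$, an element of $\inc^{\abs{J}}$. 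The term $J=\emptyset$ is the constant $\E{h(X_1^*,\dots,X_m^*)}$. Since $\pi_{k,m}(h)(X_{\gri})$ is a sum of $2^k$ such terms,
\[
\PP\pr{\abs{\pi_{k,m}(h)(X_{\gri})}>t}\leq \sum_{J}\PP\pr{\abs{g_J(X_{\gri_J})}>2^{-k}t}.
\]
It therefore suffices to prove, for each $j\in\intent{0,k}$,
\[
\lim_{t\to\infty}t^p\sup_{\gri\in\inc^j}\PP\pr{\abs{\E{H_{\gri,j}\mid X_{\gri}}}>t}=0.
\]
For $j=0$ the variable is a constant, which is finite: the hypothesis with $t^p\PP(\cdot>t)\to 0$ and $p>1$ gives integrability. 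That constant's tail vanishes for large $t$.

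The key step is a tail estimate for conditional expectations. Let $Y=H_{\gri,j}$ and $\mathcal G=\sigma(X_{\gri})$. (This requires $H_{\gri,j}$ to be defined with independent copies $X^*$; when $j=m$ the conditional expectation is trivial.) Set
\[
\eps(t):=\max_{1\leq j\leq m}\sup_{\gri}\, s^p\PP(\abs{H_{\gri,j}}>s) \text{ taken as } \sup_{s\geq t},
\]
so that $\eps(t)\to0$ by hypothesis. Write
\[
Y=Y\mathbf 1_{\abs Y\leq t/2}+Y\mathbf 1_{\abs Y>t/2}.
\]
The first piece has conditional expectation bounded by $t/2$. Hence
\[
\PP\pr{\abs{\E{Y\mid\mathcal G}}>t}\leq\PP\pr{\E{\abs Y\mathbf 1_{\abs Y>t/2}\mid\mathcal G}>t/2}.
\]
Markov's inequality alone then gives
\[
\frac 2t\,\E{\abs Y\mathbf 1_{\abs Y>t/2}}\leq \frac 2t\pr{\frac t2\,\PP(\abs Y>t/2)+\int_{t/2}^\infty\PP(\abs Y>s)\,ds}\lesssim \eps(t/2)\,t^{-p}.
\]
Here we integrate $\int_{t/2}^\infty \eps(t/2)s^{-p}\,ds\asymp \eps(t/2)t^{1-p}$, which uses $p>1$. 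Multiplying by $t^p$ gives a bound of order $\eps(t/2)\to0$, uniformly in $\gri$ and $j$. This is the whole argument; the only delicate point is that Markov's inequality for a conditional expectation loses exactly one power of $t$, and this is recovered by the integrated weak-$L^p$ tail since $p>1$. In the intended application $p=p(\alpha)>2$, so $p>1$ holds. If $p\leq1$ were allowed, the statement would need a weak-type maximal argument instead, but we do not pursue that.

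Finally, we sum over the $2^k$ subsets $J$ with thresholds $2^{-k}t$. Each term is $o(t^{-p})$ uniformly in $\gri\in\inc^k$, since $\gri_J$ ranges in $\inc^{\abs J}$ where the uniform bound applies. This yields the claim.
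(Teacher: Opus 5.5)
Your proof is correct and follows essentially the paper's route: you expand $\pi_{k,m}(h)(X_{\gri})$ into finitely many conditional expectations of the variables $H_{\gri_J,\abs{J}}$, then show that conditioning preserves the $o(t^{-p})$ tail by splitting at level $t/2$ and integrating the weak-$L^p$ tail, which is where $p>1$ enters (the paper's constant $2^{p+1}/(p-1)$ needs it too). The only difference is cosmetic: the paper starts from $t\PP(\E{Y\mid\Aca}>t)\leq\E{Y\mathbf 1_{\E{Y\mid\Aca}>t}}$ and absorbs a term $\frac{t^p}{2}\PP(\E{Y\mid\Aca}>t)$ into the left-hand side, whereas you truncate $Y$ at $t/2$ before conditioning and apply Markov's inequality directly, which avoids the absorption step.
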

\begin{proof}[Proof of Proposition ~\ref{prop:UI_carre}]
Notice that the random variable $\pi_{k,m}\pr{h}\pr{X_{\gri}}$ is a finite sum of random variables of the form $\E{H_{\gri,k}\mid X_{i_1},\dots,X_{i_k},X_1^*,\dots,X_j^*}$. Therefore, it suffices to prove   that for any $\sigma$-algebra $\Aca$ and each $k\in \intent{1,m-1}$,  
\begin{equation*}
   \ens{\pr{\E{h\pr{X_{\gri},X_1^*,\dots,X_{m-k}^*}\mid \Aca}}^2,\gri\in \inc^k} \mbox{ is uniformly integrable},
\end{equation*}
 To do so, we use Jensen's inequality and the standard fact that if $\ens{\abs{Y_j},j\in J}$ is uniformly integrable, so is $\ens{\E{\abs{Y_j}\mid \Aca},j\in J}$, for example using de la Vall\'ee Poussin theorem.
\end{proof} 
\begin{proof}[Proof of Proposition~ \ref{prop:unif_bound_H2_log}]
Similarly as before, it suffices to prove that for any $\sigma$-algebra $\Aca$ and each $k\in \intent{1,m-1}$, 
\begin{equation*}
   \sup_{\gri\in \inc^k} \E{\varphi_a\pr{ \E{   h\pr{X_{i_1},\dots,X_{i_k},X_1^*,\dots,X_{m-k}^*}\mid \Aca}}}<+\infty.
\end{equation*}
This follows from convexity of $\varphi_a$ and Jensen's inequality.
\end{proof}
\begin{proof}[Proof of Proposition~ \ref{prop:tail_Holder}]
Similarly as before, it suffices to prove that 
for each $\sigma$-algebra $\Aca$, 
\begin{equation}\label{eq:cond_exp_tail}
   \lim_{t\to +\infty} t^p \sup_{\gri\in \inc^k}  \PP\pr{ \E{\abs{h\pr{X_{i_1},\dots,X_{i_k},X_1^*,\dots,X_{m-k}^*}} \mid \Aca }>t}=0.
\end{equation}
For a non-negative random variable $Y$, 
\begin{align*}
    t^p\PP\pr{\E{Y\mid \Aca}>t}&\leq t^{p-1}
    \E{\E{Y\mid \Aca}\mathbf{1}_{\E{Y\mid \Aca}>t}}\\
    &=t^{p-1}
    \E{Y\mathbf{1}_{\E{Y\mid \Aca}>t}}\\
    &=t^{p-1}\int_0^{+\infty}\PP\pr{Y>s,\E{Y\mid \Aca}>t}ds\\
    &\leq \frac{t^p}2\PP\pr{ \E{Y\mid \Aca}>t}
    +t^{p-1}\int_{t/2}^{+\infty}\PP\pr{Y>s}ds\\
    &\leq \frac{t^p}2\PP\pr{ \E{Y\mid \Aca}>t}
    + \frac{2^p}{p-1}\sup_{s\geq t/2}s^p\PP\pr{Y>s},
\end{align*}
which gives 
\[
  t^p\PP\pr{\E{Y\mid \Aca}>t}\leq \frac{2^{p+1}}{p-1}\sup_{s\geq t/2}s^p\PP\pr{Y>s},
\]
from which \eqref{eq:cond_exp_tail} follows.
\end{proof}
\section{Proof of the results}\label{sec:proofs}

\subsection{Proof of Theorem~\ref{thm:TLC}}
 
In view of the decomposition \eqref{eq:Hoeffding}, it suffices to prove that 
\begin{equation}\label{eq:TLC_h1}
    \frac 1{\sqrt{n}}\sum_{i=1}^n\pr{h_{1,m}\pr{X_i}-\E{h_{1,m}\pr{X_i}}}\to m\sigma\Nca\mbox{ in distribution and}
\end{equation}
\begin{equation}\label{eq:TLC_partie_degeneree}
    \forall c\in\intent{2,m}, 
    \frac 1{n^{c-1/2}}\sum_{\gri\in\inc^c_n}
    \pr{\pr{\pi_{c,m}\pr{h}}\pr{X_{\gri}}-\E{\pr{\pi_{c,m}\pr{h}}\pr{X_{\gri}}} }\to 0\mbox{ in probability}.
\end{equation}
By \cite{zbMATH02233802}, Theorem 2, condition \eqref{eq:TLC_DMR} implies the convergence \eqref{eq:TLC_h1}. We decompose the proof of 
\eqref{eq:TLC_partie_degeneree} into the following steps:
\begin{equation}\label{eq:TLC_partie_degeneree_variance}
     \forall c\in\intent{2,m}, \frac 1{n^{c-1/2}}\sum_{\gri\in\inc^c_n}
    \pr{\pi_{c,m}\pr{h}}\pr{X_{\gri}} \to 0 \mbox{ in probability},
\end{equation}
\begin{equation}\label{eq:TLC_partie_degeneree_terme_de_biais}
    \forall c\in\intent{2,m}, \frac 1{n^{c-1/2}}\sum_{\gri\in\inc^c_n}\E{\pr{\pi_{c,m}\pr{h}}\pr{X_{\gri}}} \to 0.
\end{equation}
Let us show \eqref{eq:TLC_partie_degeneree_variance}. Applying 
\eqref{eq:inegalite_accroissements_h_canonique} with $a=0$, $n=N$, $m$ replaced by $c$ and $t=\eps$, we derive that \begin{align*}
\PP\pr{\frac 1{n^{c-1/2}}\abs{\sum_{\gri\in\inc^c_n}
    h_{c,m}\pr{X_{\gri}}}>\eps n^{c-1/2}}
    \leq R^2\frac{\kappa_m}{\eps^2}\frac 1n\pr{1+\sum_{k=1}^nk\beta\pr{k}}\\+\frac{\kappa_m}{\eps}
\sqrt{n}\sup_{\gri\in\inc^c_m}\E{\abs{h_{c,m}\pr{X_{\gri}} }
\mathbf{1}_{\abs{h_{c,m}\pr{X_{\gri}} }>R}}.
\end{align*}
Define 
\begin{equation}\label{eq:serie_coeff_beta}
    B:=1+\sum_{k=1}^{+\infty}k\beta\pr{k}
\end{equation}
For a fixed $\eta$, the choice \[
R=\eps \frac{\sqrt{n\eta}}{\sqrt{\kappa_m B}}
\]
combined with the elementary bound $\E{Y\mathbf{1}_{Y>R}}\leq R^{-1} \E{Y^2\mathbf{1}_{Y>R}}$ gives
\begin{multline}
    \PP\pr{\frac 1{n^{c-1/2}}\abs{\sum_{\gri\in\inc^c_n}
    h_{c,m}\pr{X_{\gri}}}>\eps n^{c-1/2}}
    \leq \eta \\ 
    +\frac{\kappa_m^{3/2}B}{\eps^2\sqrt{\eta}} 
    \sup_{\gri\in\inc^c_m}\E{\pr{h_{c,m}\pr{X_{\gri}} }^2
\mathbf{1}_{\abs{h_{c,m}\pr{X_{\gri}} }>\eps \frac{\sqrt{n\eta}}{\sqrt{\kappa_mB}}}}
\end{multline}
and we conclude by Proposition~\ref{prop:UI_carre}. Finally, \eqref{eq:TLC_partie_degeneree_terme_de_biais} follows from an application of Proposition~\ref{prop:neg_terme_biais}.

\subsection{Proof of Theorem~\ref{thm:TLCF_C}} 
In view of the decomposition \eqref{eq:Hoeffding}, the partial sum process can be written as 
\begin{equation}\label{eq:decomposition_pour_TLCF}
  \frac{\sqrt{n}}{\binom nm}\pr{W_{m,n}\pr{h,t}-\E{W_{m,n}\pr{h,t}} } :=Y_{n}\pr{t}+R_n\pr{t},
\end{equation}
where 
\begin{equation*}
Y_n\pr{t}=\begin{cases} 
m\frac{\binom km }{k}\frac{\sqrt{n}}{\binom nm}\pr{U_{1,k}\pr{\pi_{1,m}\pr{h}}
-\E{U_{1,k}\pr{\pi_{1,m}\pr{h}}}}&\mbox{ if }t=k/n,\\
\mbox{linear interpolation }& \mbox{ on }[k/n,\pr{k+1}/n,
\end{cases}
\end{equation*}
and $R_n\pr{t}$ is defined by $\sum_{c=2}^m R_{n,c}\pr{t}$, 
where 
\begin{equation*}
R_{n,c}\pr{t}=\begin{cases} \frac{\sqrt{n}}{\binom nm}\frac{\binom km \binom mc}{\binom kc}\pr{U_{c,k}\pr{\pi_{c,m}\pr{h}}
-\E{U_{c,k}\pr{\pi_{c,m}\pr{h}}}}&\mbox{ if }t=k/n,\\
\mbox{linear interpolation }& \mbox{ on }[k/n,\pr{k+1}/n.
\end{cases}
\end{equation*}
As before, the convergence is carried by the term corresponding to the first one in Hoeffding's decomposition and the contribution of $R_{n}\pr{t}$ will be shown to be negligible. Due to the presence of the terms $\frac{m}{k}\binom{k}{m}$,
$Y_n$ is not exactly a partial sum process based on a strictly stationary sequence. We define
\begin{equation}\label{eq:def_de_Y'n}
    Y'_n\pr{t}=\begin{cases} mt^{m-1}n^{-1/2}\pr{U_{1,k}\pr{\pi_{1,m}\pr{h}}
-\E{U_{1,k}\pr{\pi_{1,m}\pr{h}}}} &\mbox{ if }t=k/n,\\
\mbox{linear interpolation }& \mbox{ on }[k/n,\pr{k+1}/n.
\end{cases}
\end{equation}
By \cite{zbMATH02233802}, Theorem 2, condition \eqref{eq:TLC_DMR} implies the convergence in $C[0,1]$ of the sequence of processes 
\begin{equation} \label{eq:def_de_Y''n}
    Y''_n\pr{t}=\begin{cases} n^{-1/2}\pr{U_{1,k}\pr{\pi_{1,m}\pr{h}}
-\E{U_{1,k}\pr{\pi_{1,m}\pr{h}}}}  &\mbox{ if }t=k/n,\\
\mbox{linear interpolation }& \mbox{ on }[k/n,\pr{k+1}/n
\end{cases}
\end{equation}
to the limit that appears in \eqref{eq:TLCF_cont}.
Therefore, in order to prove the convergence of $\pr{Y_n\pr{t}}_{0\leq t\leq 1}$ to the limiting process that appears in \eqref{eq:TLCF_cont}, it suffices to prove that 
\begin{equation*}
    \lim_{n\to +\infty}\max_{m\leq k\leq n}\abs{\frac{\binom km }{k}\frac{ n }{\binom nm}-\pr{\frac kn}^{m-1}}=0.
\end{equation*}
This follows from the following inequalities, valid for $k\in \intent{m,n}$:
\begin{align*}
    \abs{\frac{\binom km }{k}\frac{ n }{\binom nm}-\pr{\frac kn}^{m-1}}&=\abs{\frac{\pr{k-1}!}{\pr{k-m}!}
    \frac{\pr{n-m}!}{\pr{n-1}!}-\pr{\frac kn}^{m-1}}\\
    &\leq \abs{\frac{\pr{k-1}!}{\pr{k-m}!}
    \pr{\frac{\pr{n-m}!}{\pr{n-1}! }-\frac{1}{n^{m-1}}}}+
   \frac{1}{n^{m-1}} \abs{\frac{\pr{k-1}!}{\pr{k-m}!}
    -  k  ^{m-1}}\\
    &\leq \abs{ \frac{\pr{n-m}!}{\pr{n-1}! }-\frac{1}{n^{m-1}} }+\frac{1}{n^{m-1}}
    \pr{k^{m-1}-\pr{k-1}^{m-1} }\\
    &\leq \abs{ \frac{\pr{n-m}!}{\pr{n-1}! }-\frac{1}{n^{m-1}} }+\frac{mn^{m-2}}{n^{m-1}}.
\end{align*}
It remains to check that the contribution of the processes $R_{n,c}\pr{t}$ is negligible. Bounding 
$\binom km/\binom kc$ by $\kappa_{m,c }n^{m-c}$, it suffices to show that 
\begin{equation*}
\frac 1 {n^{c-1/2}}    \max_{m\leq k\leq  n}
\abs{\pr{U_{c,k}\pr{\pi_{c,m}\pr{h}}
-\E{U_{c,k}\pr{\pi_{c,m}\pr{h}}}}}\to 0\mbox{ in probability}.
\end{equation*}
Applying Proposition~\ref{prop:neg_terme_biais}, 
it suffices to show that 
\begin{equation*}
\frac 1 {n^{c-1/2}}    \max_{m\leq k\leq  n}
\abs{\pr{U_{c,k}\pr{\pi_{c,m}\pr{h}}
 }}\to 0\mbox{ in probability}.
\end{equation*}
Let $\eps>0$ be fixed. 
Applying \eqref{eq:inegalite_max_h_canonique} with $\widetilde{m}=c$, $\widetilde{h}=\pi_{c,m}\pr{h}$ 
and $t=\eps$ gives 
\begin{multline}
    \PP\pr{\frac 1 {n^{c-1/2}}    \max_{m\leq k\leq  n}
\abs{\pr{U_{c,k}\pr{\pi_{c,m}\pr{h}}
 }}>\eps }\leq \frac{C_c}{\eps^2}       \pr{\log n}^2\frac{R^2}{n}  B\\+\frac{C_c}{\eps}
\sqrt{n}\sup_{\gri\in\inc^c_m}\E{\abs{h_{c,m}\pr{X_{\gri}} }
\mathbf{1}_{\abs{h_{c,m}\pr{X_{\gri}} }>R}},
\end{multline}
where $B$ is defined as in \eqref{eq:serie_coeff_beta}.
Letting $K:= C_cB $ allows to rewrite the previous bound under the form 
\begin{multline}
 \PP\pr{\frac 1 {n^{c-1/2}}    \max_{m\leq k\leq  n}
\abs{\pr{U_{c,k}\pr{\pi_{c,m}\pr{h}}
 }}>\eps }\\
 \leq K\pr{ R^2\frac{\log n}{\eps^2n}+\frac{\sqrt{n}}{\eps}\sup_{\gri\in\inc^c_m}\E{\abs{h_{c,m}\pr{X_{\gri}} }
\mathbf{1}_{\abs{h_{c,m}\pr{X_{\gri}} }>R}}    }  . 
\end{multline}
For a fixed $\eta>0$, take $R^2=\eta\eps^2 n/\pr{K\log n}$, so that the bound becomes 
\begin{multline}\label{eq:bound_WIP_C01_deg}
 \PP\pr{\frac 1 {n^{c-1/2}}    \max_{m\leq k\leq  n}
\abs{\pr{U_{c,k}\pr{\pi_{c,m}\pr{h}}
 }}>\eps }\\
 \leq \eta+ K  \frac{\sqrt{n}}{\eps}\sup_{\gri\in\inc^c_m}\E{\abs{h_{c,m}\pr{X_{\gri}} }
\mathbf{1}_{\abs{h_{c,m}\pr{X_{\gri}} }>\sqrt{\frac{\eta}K}\eps
\frac{\sqrt{n}}{\sqrt{\log n}}     }}      . 
\end{multline}
Notice that for a positive random variable $Y$ and for $A,a>0$,  
\[
\E{Y\mathbf{1}_{Y>A}}\leq \frac 1{A\pr{\log\pr{1+A}}^a}\E{Y^2 \pr{\log\pr{Y+1}}^a}. 
\]
Plugging this bound into \eqref{eq:bound_WIP_C01_deg} gives 
\begin{multline}\label{eq:bound_WIP_C01_deg_bis}
 \PP\pr{\frac 1 {n^{c-1/2}}    \max_{m\leq k\leq  n}
\abs{ U_{c,k}\pr{\pi_{c,m}\pr{h}}
  }>\eps }\\
 \leq \eta+ \kappa\pr{a,\eps,\eta,K} \frac{\sqrt{\log n}}{\pr{\log\pr{1+\sqrt{\frac{n}{\log n}}}}^a}\sup_{\gri\in\inc^c_m}\E{\pr{h_{c,m}\pr{X_{\gri}} }^2 \pr{\log\pr{1+\abs{ h_{c,m}\pr{X_{\gri}  }}}}^a}   ,
\end{multline}
where $\kappa\pr{a,\eps,\eta,K}$ is independent of $n$. Since $a>1/2$, 
Proposition~\ref{prop:unif_bound_H2_log} concludes the proof of Theorem~\ref{thm:TLCF_C}.
\subsection{Proof of Theorem~\ref{thm:TLC_Holder}} 
 
 Here again we use the decomposition \eqref{eq:decomposition_pour_TLCF}.  
 Using the same arguments as in the proof of Theorem~\ref{thm:TLCF_C} combined with the fact that 
 \begin{equation}\label{eq:diff_processus_avec_ou_sans_t_Holder}
    \lim_{n\to +\infty}n^\alpha\max_{m\leq k\leq n}\abs{\frac{\binom km }{k}\frac{ n }{\binom nm}-\pr{\frac kn}^{m-1}}=0,
\end{equation}
we are reduced to show that $\pr{Y''_n\pr{t}}_{0\leq t\leq 1}$ defined as \eqref{eq:def_de_Y''n} converges to $\pr{\sigma B_t}_{0\leq t\leq 1}$ and that 
the H\"older norm of the processes $\pr{R_{n,c}\pr{t}}_{0\leq t\leq 1}$ goes to $0$ in probability as $n$ goes to infinity.
The first part follows from Corollary~2.2 in \cite{zbMATH06705457} and \eqref{eq:DMR_condition_Holder}.
Using again \eqref{eq:diff_processus_avec_ou_sans_t_Holder}, it suffices to show that $\pr{R'_{n,c}\pr{t}}_{0\leq t\leq 1}$ goes to $0$ in probability as $n$ goes to infinity where 
\begin{equation*}
R'_{n,c}\pr{t}=\begin{cases}  \frac 1{n^{c/2}}\pr{U_{c,k}\pr{\pi_{c,m}\pr{h}}
-\E{U_{c,k}\pr{\pi_{c,m}\pr{h}}}}&\mbox{ if }t=k/n,\\
\mbox{linear interpolation }& \mbox{ on }[k/n,\pr{k+1}/n.
\end{cases}
\end{equation*}
We have already seen the convergence to $0$ of the finite dimensional distributions. It remains to check its tightness, which will be done thanks to  Proposition~1.1 in \cite{zbMATH07420266}. This reduces the proof to show that for each positive $\eps$, 
\begin{equation*}
    \lim_{J\to +\infty}\limsup_{n\to +\infty}
    \sum_{j=J}^{\ent{\log_2 n}}
    \sum_{k=0}^{2^{j}-1}
    \PP\pr{\abs{U_{c,\ent{n\pr{k+1}2^{-j}}  }\pr{\pi_{c,m}\pr{h}}-U_{c,\ent{nk2^{-j}}  }\pr{\pi_{c,m}\pr{h}} }> n^{c-1/2}2^{-j\alpha}\eps}=0.
\end{equation*}
Applying inequality \eqref{eq:inegalite_accroissements_h_canonique} for fixed $J\geq 1$, $j\in \intent{J,\ent{\log_2 n}}$ and $k\in \intent{0,2^j-1}$ with $m=c$, $\widetilde{h}=\pi_{c,m}\pr{h}$, $a=\ent{nk2^{-j}}$, $N=\ent{n\pr{k+1}2^{-j}}-\ent{nk2^{-j}}$ and $t=2^{-j\alpha}\eps$ gives 
\begin{align*}
  & \sum_{j=J}^{\ent{\log_2 n}}
    \sum_{k=0}^{2^{j}-1}
    \PP\pr{\abs{U_{c,\ent{n\pr{k+1}2^{-j}}  }\pr{\pi_{c,m}\pr{h}}-U_{c,\ent{nk2^{-j}}  }\pr{\pi_{c,m}\pr{h}} }> n^{c-1/2}2^{-j\alpha}\eps}\\
    &\leq C\pr{c,\eps,\beta}\sum_{j=J}^{\ent{\log_2 n}}
    \sum_{k=0}^{2^{j}-1}\pr{ R^2 \frac{n2^{-j}\pr{nk2^{-j}}^{2c-3}}{\pr{n^{c-1/2}2^{-j\alpha}}^2}+
    \frac{n2^{-j}\pr{nk2^{-j}}^{c-1}}{ n^{c-1/2}2^{-j\alpha} }
     \tau_c\pr{R}}\\
    &\leq  C\pr{c,\eps,\beta}\sum_{j=J}^{\ent{\log_2 n}}\pr{\frac{R^2}n 2^{j\alpha}+ n^{1/2}
    2^{j\alpha }\tau_c\pr{R}}\\
    &\leq C\pr{c,\eps,\beta}\pr{R^2
    n^{ -1+2\alpha}+n^{1/2+\alpha}\tau_c\pr{R} },
\end{align*}
where 
\[
\tau_c\pr{R}:=\sup_{\gri\in\inc^c}
    \E{\abs{\pi_{c,m}\pr{h}\pr{X_{\gri}}}\mathbf{1}_{\abs{\pi_{c,m}\pr{h}\pr{X_{\gri}}}>R} } .
\]
For a fixed positive $\eta$, we choose $R=\eta n^{1/2-\alpha}$ and conclude by \eqref{eq:tail_condition_Holder}.
\providecommand{\bysame}{\leavevmode\hbox to3em{\hrulefill}\thinspace}
\providecommand{\MR}{\relax\ifhmode\unskip\space\fi MR }
% \MRhref is called by the amsart/book/proc definition of \MR.
\providecommand{\MRhref}[2]{%
  \href{http://www.ams.org/mathscinet-getitem?mr=#1}{#2}
}
\providecommand{\href}[2]{#2}

\end{document}